\documentclass[11pt,reqno]{amsart}

\usepackage[utf8]{inputenc}

\usepackage{hyperref}
\usepackage{enumerate}
\usepackage{tikz-cd}   
\usepackage{graphicx}
\usepackage{tikz}
\usepackage{float}

\usepackage{longtable}

\usepackage{a4wide}
\newcommand{\n}{\overline{n}}

\newcommand{\Z}{\mathbb{Z}}

\DeclareMathOperator{\End}{End}

\DeclareMathOperator{\id}{id}

\title{Very good gradings on structural matrix rings}	

\author{Patrik Lundstr\"om}
\address{Department of Engineering Science,
University West, SE-46186 Trollh\"attan, Sweden}
\email{patrik.lundstrom@hv.se}

\author{Johan \"Oinert }
\address{Department of Mathematics and Natural Sciences,
Blekinge Institute of Technology,
SE-37179 Karlskrona, Sweden \ \ and \ \ Department of Engineering,
University of Sk\"{o}vde,
SE-54128 Sk\"{o}vde, Sweden}
\email{johan.oinert@bth.se}

\author{Laura Orozco}
\address{Departamento de Matem\'atica, Universidade Federal de Santa Catarina, 88040-970 Florian\'opolis SC, Brazil}
\email{lanaorga@gmail.com}
	
\author{H\'ector Pinedo}
\address{Escuela de Matematicas, Universidad Industrial de Santander, Cra. 27 Calle 9  UIS
	Edificio 45, Bucaramanga, Colombia} 
	\email{hpinedot@uis.edu.co}

\subjclass[2020]{16W50, 18B35, 16S50, 16S35, 16W22}

\keywords{structural matrix ring, graded preorder, 
very good grading, epsilon-strong grading, 
epsilon-crossed product, partial action}

\date{\today}

\usepackage{amsmath,amssymb,amsthm}

\theoremstyle{plain}
\newtheorem{theorem}{Theorem}
\newtheorem{lemma}[theorem]{Lemma}
\newtheorem{prop}[theorem]{Proposition}
\newtheorem{cor}[theorem]{Corollary}
\theoremstyle{definition}
\newtheorem{defn}[theorem]{Definition}

\newtheorem{exmp}[theorem]{Example}

\newtheorem{question}{Question}

\theoremstyle{remark}
\newtheorem{remark}[theorem]{Remark}

\begin{document}

\begin{abstract}
Let $R$ be a nonzero associative unital ring, let $G$ be a group,
and let $\rho$ be a preorder on $\{1,\ldots,n\}$. A $G$-grading on
$\rho$ induces a \emph{very good} $G$-grading on the structural
matrix ring $M_n(\rho,R)$. 
We show that, for each of the properties trivial, symmetric, epsilon-strong and strong, the grading on $\rho$ has the property if and only if the induced ring grading does.
The epsilon-crossed product and
crossed product properties pass from $\rho$ to the ring, but the
converses fail in general. We also give a concrete criterion for
epsilon-strongness and show that a very good $G$-grading on $M_n(\rho,R)$ that is strong satisfies
$|G|\leq n$. When $\rho$ is an equivalence relation and the neutral
component is diagonal, very good gradings correspond bijectively to
free partial actions of $G$ on $\{1,\ldots,n\}$ with orbit relation
$\rho$. These gradings are epsilon-crossed products, and over a field
the correspondence gives a classification up to graded algebra
isomorphism.
\end{abstract}

\maketitle

\section{Introduction}

Suppose that $R$ is a nonzero associative unital ring with multiplicative
identity $1$, and let $n$ be a positive integer. There are several
natural ways to construct subrings of $M_n(R)$, the ring of
$n\times n$ matrices over $R$. One such method is to prescribe the
\emph{shape} of the matrices, as in the subrings of diagonal, upper
triangular, or lower triangular matrices.

This idea was formalized by Mitchell in
\cite[p.~231]{mitchell1965}. Put
$\overline{n}:=\{1,\ldots,n\}$ and 
$\overline{n}^{\,2}:=\overline{n}\times\overline{n}$.
For each $(i,j)\in\overline{n}^{\,2}$, let $e_{i,j}$ denote the
matrix having $1$ in the $(i,j)$-entry and zeros 
elsewhere. Mitchell
defines a \emph{pattern} to be a subset $\rho$ of
$\overline{n}^{\,2}$. Associated with such a pattern is the 
additive subgroup $\sum_{(i,j)\in\rho}Re_{i,j}$
of $M_n(R)$, consisting of all matrices whose 
nonzero entries are
restricted to positions belonging to $\rho$.

This additive subgroup is closed under multiplication 
if and only if
$\rho$, viewed as a binary relation on $\overline{n}$, 
is transitive. It contains the identity matrix of $M_n(R)$ 
if and only if $\rho$ is reflexive. 
Consequently, it is a unital subring of $M_n(R)$ with 
the same identity if and only if $\rho$ is a preorder 
on $\overline{n}$.

For a preorder $\rho$ on $\overline{n}$, we write
$M_n(\rho,R):=\sum_{(i,j)\in\rho}Re_{i,j}$
for the corresponding subring of $M_n(R)$. Following van Wyk
\cite{vanwyck1988Maximal}, we call $M_n(\rho,R)$ a
\emph{structural matrix ring}. When $\rho$ is a partial 
order, that is, a reflexive, antisymmetric, and transitive
relation, the structural matrix ring $M_n(\rho,R)$ 
is the incidence algebra of the
finite poset $(\overline{n},\rho)$. Incidence algebras 
were introduced by Rota in \cite{rota1964}, 
independently of Mitchell's 
work \cite{mitchell1965}.

Structural matrix rings have been studied from several perspectives, including radical and ideal theory \cite{sands1990,vanwyck1988Maximal}, automorphism groups and isomorphism problems \cite{coelho1994,dascalescu1996}, and applications to coding theory \cite{kelarev2001}. 

In this article, we focus on a different aspect, namely gradings on structural matrix rings.
Let $G$ be a group with identity element $e$,
and let $S$ be an associative unital ring. Recall that $S$ is said to
be \emph{$G$-graded} if there is a family
$(S_g)_{g\in G}$ of additive subgroups of $S$ such that
$S=\bigoplus_{g\in G}S_g$ and $S_g S_h\subseteq S_{gh}$
for all $g,h\in G$. The elements of
$\bigcup_{g\in G}S_g$
are called \emph{homogeneous}. Group-graded rings include many
important classes of rings, such as polynomial rings, skew group
rings, twisted group rings, graded matrix rings, crossed products, and
their partial analogues; see, for example, \cite{nas04,NYO}.

Several important classes of group gradings have been introduced. The
following ones will be particularly relevant in this article. A
$G$-grading $(S_g)_{g\in G}$ on $S$ is said to be

\begin{itemize}

\item \emph{symmetric} if $S_gS_{g^{-1}}S_g=S_g$
for every $g\in G$;

\item \emph{strong} if $S_gS_{g^{-1}}=S_e$
for every $g\in G$;

\item a \emph{crossed product} if, for every $g\in G$, the 
component $S_g$ contains an element that is invertible in $S$;

\item \emph{epsilon-strong} if, for every $g\in G$, there is 
an element $\epsilon_g\in S_gS_{g^{-1}}$
such that
\begin{equation}\label{eqep}
\epsilon_gs=s\epsilon_{g^{-1}}=s
\end{equation}
for every $s\in S_g$;

\item an \emph{epsilon-crossed product} if it is 
epsilon-strong and,
for every $g\in G$, there are elements
$s_g \in S_g$ and $t_{g^{-1}}\in S_{g^{-1}}$ such that
$s_gt_{g^{-1}}=\epsilon_g$ and 
$t_{g^{-1}}s_g=\epsilon_{g^{-1}}$;

\item \emph{trivial} if $S_g = \{0\}$
for every $g\in G\setminus\{e\}$.

\end{itemize}

These classes satisfy certain containments illustrated by the following implications:
\[
\begin{array}{l}
\text{crossed product}
\Longrightarrow
\text{strong}
\Longrightarrow
\text{epsilon-strong}
\Longrightarrow
\text{symmetric},
\\[5pt]
\text{crossed product}
\Longrightarrow
\text{epsilon-crossed product}
\Longrightarrow
\text{epsilon-strong}.
\end{array}
\]
Moreover, every trivial grading is an epsilon-crossed product and is
therefore epsilon-strong and symmetric. In general, none of the
displayed implications can be reversed. For further details, see
\cite{nas04} for general group-graded rings and \cite{NYO} for epsilon-strongly
graded rings.

The starting point of this article is the following observation.
Suppose that $\rho$ is a preorder on $\overline{n}$ and that
$(\rho_g)_{g\in G}$ is a family of pairwise disjoint subsets of
$\rho$ such that
$\rho = \bigcup_{g\in G}\rho_g$ and 
$\rho_g\rho_h\subseteq\rho_{gh}$
for all $g,h\in G$. Then, for every $g\in G$, setting
$M_n(\rho,R)_g := \sum_{(i,j)\in\rho_g}Re_{i,j}$
defines a $G$-grading on $M_n(\rho,R)$; see
Proposition~\ref{prop:rhodefinesS}. In this situation, we say that
$\rho$ is $G$-graded and that the induced grading on
$M_n(\rho,R)$ is \emph{very good}.

This leads to the central question of this article.

\begin{question}
\emph{How are properties of a $G$-grading on $\rho$ related to the
corresponding properties of the induced very good $G$-grading on
$M_n(\rho,R)$? In particular, which grading properties pass between
the relation and the structural matrix ring?}
\end{question}

To the best of our knowledge, gradings on general structural matrix rings have previously been considered only in \cite{BD,BDV,DP}. Gradings on full matrix rings, that is, structural matrix rings corresponding to $\rho=\overline{n}^{\,2}$, have been studied much more extensively; see, for example, \cite{BSZ,BD,cae02,das99} and the references therein.
In that setting, the more general notion of a \emph{good} grading has
also been introduced: a grading is good if every matrix unit
$e_{i,j}$ is homogeneous. In this article, however, we focus
exclusively on very good gradings, namely those induced by
$G$-gradings on preorders as described above.

Here is an outline of this article. 
In
Section~\ref{gradedp}, we introduce $G$-gradings on preorders and
several important classes of such gradings. Their relationships are
described in Proposition~\ref{prop:implicationsrho}. We also prove in
Proposition~\ref{prop:i-ii-iii} that every graded equivalence relation
is epsilon-strong. As a consequence, Corollary~\ref{full} shows that
every $G$-grading on $\overline{n}^{\,2}$ is epsilon-strong.

In Section~\ref{structural}, we turn to structural matrix rings.
Proposition~\ref{prop:rhodefinesS} shows that every $G$-grading on a
preorder $\rho$ induces a very good $G$-grading on $M_n(\rho,R)$.
Theorem~\ref{relring} shows that triviality, symmetry,
epsilon-strongness, and strongness are preserved in both directions
when passing between a grading on $\rho$ and the induced ring
grading. It also shows that the epsilon-crossed product and
crossed product properties pass from $\rho$ to the ring. 
In Example~\ref{ex:crossed-converses} we show 
that the converses of these latter implications fail
for arbitrary coefficient rings. We then obtain a concrete
combinatorial criterion for epsilon-strongness and determine the
corresponding epsilon elements. 
We also prove that if $M_n(\rho,R)$ is equipped with a very good $G$-grading that is strong,
then $|G| \leq n$, with equality only when $\rho = \overline{n}^{\,2}$; see
Theorem~\ref{thm:strong-cardinality}.

Finally, in Section~\ref{partial}, we relate very good gradings to
partial group actions. When $\rho$ is an equivalence relation, we
establish a bijection between the very good $G$-gradings on
$M_n(\rho,R)$ whose neutral component consists precisely of the
diagonal matrices and the free partial actions of $G$ on
$\overline{n}$ whose orbit equivalence relation is $\rho$; see Theorem~\ref{thm:partial-action-correspondence}. 
Every
grading occurring in this correspondence is an epsilon-crossed
product, and strong gradings correspond precisely to global partial
actions. In the full matrix case, strictly elementary gradings
correspond to free and transitive partial actions; see Corollary~\ref{cor:strict-elementary-partial}. When the
coefficient ring is a field, we also classify the relevant gradings
up to graded algebra isomorphism; see Theorem~\ref{thm:classification-partial-actions} and Corollary~\ref{cor:classification-strict-elementary}.

\section{$G$-graded preorders on $\{1, \ldots, n\}$}\label{gradedp}

In this section, we introduce the notation and basic 
properties of $G$-graded preorders that will be used 
throughout this article. Let $\mathbb{N}$ denote the set of positive integers, and fix $n\in\mathbb{N}$. Put
$\overline{n} := \{1, \ldots, n\}$ and 
$\overline{n}^2 := \overline{n} \times \overline{n}$.
We define an inversion operation on $\overline{n}^2$ by
\[ (i,j)^{-1} := (j,i), \]
for all $i, j \in \overline{n}$, and a partial
multiplication by
\[ (i,j)(j,k) := (i,k), \]
for all $i,j,k \in \overline{n}$.
More generally, for subsets $X, Y \subseteq \overline{n}^2$, we define
\[ 
X^{-1} := \left\{ (j,i) \in \overline{n}^2 \mid (i,j) \in X \right\}
\] 
and
\begin{displaymath}
XY := \left\{ (i,k) \in \overline{n}^2 \mid \text{there exists } j \in \overline{n} \text{ such that } (i,j) \in X, (j,k) \in Y \right\}.
\end{displaymath}

We denote by $\Delta := \{ (i,i) \mid i \in \overline{n} \}$ the \emph{diagonal} of $\overline{n}^2$.
Let $\rho \subseteq \overline{n}^2$ be a nonempty binary relation on $\overline{n}$. Recall that $\rho$ is called \emph{reflexive} if $\Delta \subseteq \rho$, and \emph{transitive} if $\rho \rho \subseteq \rho$. 

Throughout this section, $\rho$ denotes a preorder on 
$\overline{n}$, and $G$ denotes a group with identity 
element $e$. 

\begin{defn}\label{gradedr}
We say that $\rho$ is \emph{$G$-graded} if there is a 
family  $(\rho_g)_{g\in G}$ of pairwise disjoint subsets of
$\rho$ such that
$\rho = \bigcup_{g\in G}\rho_g$ and 
$\rho_g\rho_h\subseteq\rho_{gh}$
for all $g,h\in G$.
In that case, we will call 
$(\rho_g)_{g \in G}$ a \emph{$G$-grading on $\rho$}.
\end{defn}

The following elementary observation will be used repeatedly
later on.

\begin{prop}\label{prop:deltasubsetrho}
Suppose that $\rho$ is $G$-graded. Then the following assertions hold:
\begin{enumerate}[{\rm (i)}]

\item $\Delta\subseteq\rho_e$;

\item for every $g\in G$,
$\rho_g = \Delta\rho_g = \rho_g\Delta =
\rho_e\rho_g = \rho_g\rho_e$.

\end{enumerate}
\end{prop}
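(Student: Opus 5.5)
For (i), I will fix $i\in\overline{n}$. Reflexivity gives $(i,i)\in\rho$, so there is a unique $g\in G$ with $(i,i)\in\rho_g$, because the $\rho_g$ are pairwise disjoint and cover $\rho$. The partial multiplication gives $(i,i)(i,i)=(i,i)$. Hence $(i,i)\in\rho_g\rho_g\subseteq\rho_{g^2}$. By disjointness, $(i,i)$ lies in only one component, so $g^2=g$ and therefore $g=e$. Since $i$ was arbitrary, $\Delta\subseteq\rho_e$.

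For (ii), I will fix $g\in G$ and prove two containments, starting with $\rho_g\subseteq\Delta\rho_g$. Take any $(i,j)\in\rho_g$. Since $(i,i)\in\Delta$ and $(i,i)(i,j)=(i,j)$, this element lies in $\Delta\rho_g$. Next, part (i) gives $\Delta\rho_g\subseteq\rho_e\rho_g$. The grading axiom then gives $\rho_e\rho_g\subseteq\rho_{eg}=\rho_g$. Together these yield the chain
\[
\rho_g\subseteq\Delta\rho_g\subseteq\rho_e\rho_g\subseteq\rho_g,
\]
so all three sets coincide.

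The right-hand versions follow the same way. Using $(i,j)(j,j)=(i,j)$, I get
\[
\rho_g\subseteq\rho_g\Delta\subseteq\rho_g\rho_e\subseteq\rho_{ge}=\rho_g.
\]

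I expect no real obstacle. The only point needing care is in (i): concluding $g=e$ from $(i,i)\in\rho_g\cap\rho_{g^2}$ relies on the components being pairwise disjoint.
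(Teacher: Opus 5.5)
Your proposal is correct and follows essentially the same route as the paper. You prove (i) by idempotence of $(i,i)$ plus disjointness of the components, and (ii) by the sandwich $\rho_g\subseteq\Delta\rho_g\subseteq\rho_e\rho_g\subseteq\rho_g$ and its right-hand analogue. The only cosmetic difference is that you derive $\rho_g\subseteq\Delta\rho_g$ elementwise, whereas the paper simply asserts $\Delta\rho_g=\rho_g=\rho_g\Delta$ from reflexivity.
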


\begin{proof}
For (i), let $i\in\overline{n}$. 
Then $(i,i)\in\rho_g$ for some
$g\in G$. Since $(i,i) = (i,i)(i,i) \in
\rho_g\rho_g \subseteq \rho_{g^2}$, we have
$(i,i)\in\rho_g\cap\rho_{g^2}$.
The sets $\rho_h$, $h\in G$, are pairwise disjoint, and hence
$g=g^2$. Therefore, $g=e$, which proves that
$\Delta\subseteq\rho_e$.

For (ii), let $g\in G$. Since $\rho$ is reflexive, we have
$\Delta\rho_g=\rho_g=\rho_g\Delta$.
By (i) and the grading property,
$\rho_g = \Delta\rho_g \subseteq \rho_e\rho_g
\subseteq \rho_g$.
Thus $\rho_e\rho_g=\rho_g$. Similarly,
$\rho_g = \rho_g\Delta \subseteq \rho_g\rho_e
\subseteq \rho_g$,
and hence $\rho_g\rho_e=\rho_g$.
\end{proof}

\begin{defn}\label{gradedr2}
We say that a $G$-grading $(\rho_g)_{g \in G}$ on the preorder $\rho$ is:
\begin{itemize}

\item \emph{trivial} if for each 
$g \in G \setminus \{ e \}$, the equality
$\rho_g = \emptyset$ holds;

\item \emph{symmetric} if for each $g \in G$
the equality
$\rho_g \rho_{g^{-1}} \rho_g = \rho_g$ holds;

\item \emph{strong} if for each $g \in G$
the equality
$\rho_g \rho_{g^{-1}} = \rho_e$ holds;

\item a \emph{crossed product} if for each 
$g \in G$ there are $U_g \subseteq \rho_g$ and 
$V_{g^{-1}} \subseteq \rho_{g^{-1}}$ such that 
the equalities
$U_g V_{g^{-1}} = V_{g^{-1}} U_g = \Delta$ hold;

\item \emph{epsilon-strong} if 
for each $g\in G$ there is a 
subset $E_g$ of $(\rho_g \rho_{g^{-1}}) \cap 
\Delta$ such that for every $(i,j) \in \rho_g$, 
the equalities 
$(i,j) = E_g (i,j) = (i,j) E_{g^{-1}}$ hold;

\item an \emph{epsilon-crossed product} if 
$\rho$ is epsilon-strong (with the sets
$E_g$ as above) and for each 
$g\in G$, there are $U_g \subseteq \rho_g$ and 
$V_{g^{-1}} \subseteq \rho_{g^{-1}}$ such that 
$U_g V_{g^{-1}} = E_g$ and  
$V_{g^{-1}} U_g = E_{g^{-1}}$.

\end{itemize}
\end{defn}

\begin{remark}\label{rem:epsilon-set}
Suppose that $\rho$ is epsilon-strongly $G$-graded. Then, for every
$g\in G$, the epsilon set $E_g$ is uniquely determined and is given by
$E_g=\rho_g\rho_{g^{-1}}\cap\Delta$.
Indeed, the inclusion
$E_g\subseteq\rho_g\rho_{g^{-1}}
\cap\Delta$
holds by definition. Conversely, let
$(i,i)\in\rho_g\rho_{g^{-1}}\cap\Delta$.
Then there is some $j\in\overline{n}$ such that
$(i,j)\in\rho_g$ and 
$(j,i)\in\rho_{g^{-1}}$.
Since $E_g(i,j)=(i,j)$, 
it follows that
$(i,i)\in E_g$. Thus
$\rho_g\rho_{g^{-1}}\cap\Delta
\subseteq E_g$.
\end{remark}

\begin{prop}
Suppose that $\rho$ is $G$-graded. The following
assertions are equivalent:
\begin{enumerate}[{\rm (i)}]

\item $\rho$ is strongly $G$-graded; 

\item for all $g,h \in G$, the equality 
$\rho_g \rho_h = \rho_{gh}$ holds.

\end{enumerate}
\end{prop}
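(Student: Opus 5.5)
The argument mirrors the standard one for strongly graded rings, with Proposition~\ref{prop:deltasubsetrho}(ii) playing the role of the identity element.

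For (ii)$\Rightarrow$(i), take $h=g^{-1}$ in (ii). This gives $\rho_g\rho_{g^{-1}}=\rho_{gg^{-1}}=\rho_e$ for every $g\in G$, which is exactly the definition of a strong grading.

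For (i)$\Rightarrow$(ii), fix $g,h\in G$. The inclusion $\rho_g\rho_h\subseteq\rho_{gh}$ holds by Definition~\ref{gradedr}, so only the reverse inclusion needs work. I would first show that the multiplication of subsets of $\overline{n}^{\,2}$ is associative, i.e.\ $(XY)Z=X(YZ)$. Both sides consist of the pairs $(i,l)$ for which there exist $j,k$ with $(i,j)\in X$, $(j,k)\in Y$ and $(k,l)\in Z$. Then Proposition~\ref{prop:deltasubsetrho}(ii) and strongness applied to $g^{-1}$ (so that $\rho_{g^{-1}}\rho_g=\rho_e$) give the chain
\[
\rho_{gh}=\rho_e\rho_{gh}=(\rho_g\rho_{g^{-1}})\rho_{gh}=\rho_g(\rho_{g^{-1}}\rho_{gh})\subseteq\rho_g\rho_{g^{-1}gh}=\rho_g\rho_h.
\]
The last inclusion is monotonicity of subset multiplication: if $X'\subseteq X$ then $YX'\subseteq YX$. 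Combining the two inclusions gives $\rho_g\rho_h=\rho_{gh}$.

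There is no real obstacle. The only points needing care are the associativity of subset multiplication and the use of the identity $\rho_e\rho_{gh}=\rho_{gh}$ from Proposition~\ref{prop:deltasubsetrho}(ii).
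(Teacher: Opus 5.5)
Your proof is correct and is essentially the paper's argument in mirror image: the paper writes $\rho_{gh}=\rho_{gh}\rho_e=\rho_{gh}\rho_{h^{-1}}\rho_h\subseteq\rho_g\rho_h$, inserting $\rho_e$ on the right, while you insert $\rho_e=\rho_g\rho_{g^{-1}}$ on the left. One harmless slip: your chain uses $\rho_g\rho_{g^{-1}}=\rho_e$, which is strongness at $g$ itself, not at $g^{-1}$ as your parenthetical says; this does not matter, since strongness holds for every element of $G$.
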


\begin{proof}
(i)$\Rightarrow$(ii):
Suppose that $\rho$ is strongly $G$-graded. 
Take $g,h \in G$. Then
$\rho_g \rho_h \subseteq \rho_{gh} = \rho_{gh} 
\rho_e
= \rho_{gh} \rho_{h^{-1}} \rho_h \subseteq 
\rho_{gh h^{-1}} \rho_h = \rho_g \rho_h$.
Therefore, $\rho_g \rho_h = \rho_{gh}$.

(ii)$\Rightarrow$(i): This is trivial.
\end{proof}

\begin{defn}
Suppose that $\rho$ is $G$-graded.  
A subset $\mathcal{I} \subseteq \rho_e$ is called an \emph{ideal} of $\rho_e$ if 
$\mathcal{I} \rho_e \subseteq \mathcal{I}$ and 
$\rho_e \mathcal{I} \subseteq \mathcal{I}$.  
We say that the ideal $\mathcal{I}$ is \emph{unital} if there is a subset $I \subseteq \mathcal{I} \cap \Delta$ such that  
$I(i,j) = (i,j) = (i,j)I$ for all 
$(i,j) \in \mathcal{I}$ . In that case we will refer to $I$ as 
the \emph{unit} of $\mathcal{I}$.
\end{defn}

\begin{prop}\label{prop:epsilon-characterization}
Suppose that $\rho$ is $G$-graded. The following
assertions are equivalent:
\begin{enumerate}[{\rm (i)}]

\item $\rho$ is epsilon-strongly $G$-graded;

\item for each $g \in G$, $\rho_g \rho_{g^{-1}}$
is a unital ideal of $\rho_e$ such that for all
$g,h \in G$, the equalities $\rho_g \rho_h =
\rho_g \rho_{g^{-1}} \rho_{gh} = 
\rho_{gh} \rho_{h^{-1}} \rho_h$ hold;

\item $\rho$ is symmetrically $G$-graded and
for each $g \in G$, $\rho_g \rho_{g^{-1}}$ is 
a unital ideal of $\rho_e$.

\end{enumerate}
\end{prop}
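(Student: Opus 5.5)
I would prove the cycle (i)$\Rightarrow$(ii)$\Rightarrow$(iii)$\Rightarrow$(i). The basic tool is Proposition~\ref{prop:deltasubsetrho}(ii), which gives $\rho_e\rho_g=\rho_g=\rho_g\rho_e$. From it, $\rho_g\rho_{g^{-1}}$ is always an ideal of $\rho_e$: it is contained in $\rho_e$ by the grading property, and $\rho_e\rho_g\rho_{g^{-1}}=\rho_g\rho_{g^{-1}}=\rho_g\rho_{g^{-1}}\rho_e$. So in each direction only unitality and the displayed equalities need work.

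(i)$\Rightarrow$(ii): Let $E_g$ be the epsilon sets; by Remark~\ref{rem:epsilon-set}, $E_g=\rho_g\rho_{g^{-1}}\cap\Delta$. I claim $E_g$ is a unit for $\rho_g\rho_{g^{-1}}$. Take $(i,k)=(i,j)(j,k)$ with $(i,j)\in\rho_g$ and $(j,k)\in\rho_{g^{-1}}$. Then $E_g(i,j)=(i,j)$ gives $E_g(i,k)=(i,k)$. For the right side, apply the epsilon condition to $g^{-1}$: since $(j,k)\in\rho_{g^{-1}}$, we get $(j,k)E_{(g^{-1})^{-1}}=(j,k)E_g=(j,k)$, hence $(i,k)E_g=(i,k)$.

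For the equalities, the inclusion $\rho_g\rho_{g^{-1}}\rho_{gh}\subseteq\rho_g\rho_{g^{-1}gh}=\rho_g\rho_h$ holds by the grading. Conversely, take $(i,k)=(i,j)(j,k)$ with $(i,j)\in\rho_g$ and $(j,k)\in\rho_h$. Since $(i,i)\in E_g\subseteq\rho_g\rho_{g^{-1}}$, we may write $(i,i)=(i,l)(l,i)$ with $(i,l)\in\rho_g$ and $(l,i)\in\rho_{g^{-1}}$. Then $(l,k)=(l,i)(i,k)\in\rho_{g^{-1}}\rho_g\rho_h\subseteq\rho_h$, so $(i,k)=(i,l)(l,k)\in\rho_g\rho_{g^{-1}}\rho_h$. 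This is the wrong last factor; we want $\rho_{gh}$. Instead use $(i,k)\in\rho_{gh}$ directly: $(i,k)=(i,l)\,(l,i)(i,k)$ with $(i,l)\in\rho_g$ and $(l,i)(i,k)\in\rho_{g^{-1}}\rho_{gh}$. This shows $\rho_g\rho_h\subseteq\rho_g\rho_{g^{-1}}\rho_{gh}$. The equality with $\rho_{gh}\rho_{h^{-1}}\rho_h$ is symmetric, using $(k,k)\in E_{h^{-1}}\subseteq\rho_{h^{-1}}\rho_h$. Here $(k,k)\in E_{h^{-1}}$ holds because $(j,k)\in\rho_h$ satisfies $(j,k)E_{h^{-1}}=(j,k)$.

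(ii)$\Rightarrow$(iii): Take $h=g^{-1}g$; more concretely, put $h=e$ in $\rho_g\rho_h=\rho_{gh}\rho_{h^{-1}}\rho_h$ after relabeling. The cleanest route is to apply the first equality with $(g,h)$ replaced by $(g,g^{-1}g)=(g,e)$:
\[\rho_g=\rho_g\rho_e=\rho_g\rho_{g^{-1}}\rho_g.\]
This gives symmetry, and unitality is assumed in (ii).

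(iii)$\Rightarrow$(i): Let $I_g\subseteq\rho_g\rho_{g^{-1}}\cap\Delta$ be the unit, and set $E_g:=I_g$. For $(i,j)\in\rho_g$, symmetry gives $(i,j)\in\rho_g\rho_{g^{-1}}\rho_g$, so $(i,j)=(i,k)(k,j)$ with $(i,k)\in\rho_g\rho_{g^{-1}}$. Unitality gives $I_g(i,k)=(i,k)$, hence $E_g(i,j)=(i,j)$. Likewise $(i,j)=(i,m)(m,j)$ with $(m,j)\in\rho_{g^{-1}}\rho_g$, and $(m,j)I_{g^{-1}}=(m,j)$, hence $(i,j)E_{g^{-1}}=(i,j)$.

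The main obstacle is the care needed in (i)$\Rightarrow$(ii). One must note that each $(i,i)\in E_g$ factors through $\rho_g\times\rho_{g^{-1}}$, and must choose the factorizations so that the last factor lands in $\rho_{gh}$ rather than $\rho_h$. The rest is bookkeeping with the partial product, where one should remember that these products of sets are singleton-valued.
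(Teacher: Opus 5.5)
Your proof is correct and follows the paper's argument essentially step for step. In (i)$\Rightarrow$(ii) you use the same unit $E_g=\rho_g\rho_{g^{-1}}\cap\Delta$ and factor through $(i,i)\in E_g\subseteq\rho_g\rho_{g^{-1}}$, which is an elementwise version of the paper's chain $\rho_g\rho_h=E_g\rho_g\rho_h\subseteq\rho_g\rho_{g^{-1}}\rho_{gh}\subseteq\rho_g\rho_h$. You also use the substitution $h=e$ for (ii)$\Rightarrow$(iii), and the same two factorizations of $(i,j)\in\rho_g\rho_{g^{-1}}\rho_g$ for (iii)$\Rightarrow$(i).

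In a write-up, delete the abandoned false start in (i)$\Rightarrow$(ii). Its claim $(i,k)\in\rho_g\rho_{g^{-1}}\rho_h$ is actually false for $g\neq e$: that set lies in $\rho_h$, while $(i,k)\in\rho_{gh}$, and these components are disjoint.
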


\begin{proof}
(i)$\Rightarrow$(ii):
Suppose that $\rho$ is epsilon-strongly $G$-graded.
Take $g \in G$.
Clearly $\rho_g \rho_{g^{-1}}$ is an ideal of $\rho_e$. 
By Remark~\ref{rem:epsilon-set}, $E_g=
\rho_g \rho_{g^{-1}} \cap \Delta$.
We claim that $E_g$ is a unit for  $\rho_g \rho_{g^{-1}}$.
Indeed, take $(i,j) \in \rho_g \rho_{g^{-1}}$.
There is some $k \in \overline{n}$ such that 
$(i,k) \in \rho_g$ and $(k,j) \in \rho_{g^{-1}}$.
Note that $E_g(i,k) = (i,k)$
and $(k,j)E_g = (k,j)$. Thus,
$E_g(i,j) = E_g (i,k)(k,j) = (i,k)(k,j) = (i,j)$ and
$(i,j) E_g = (i,k)(k,j) E_g = (i,k)(k,j) = (i,j),$ as desired.
Now, take $g,h \in G$. Then
\[
\rho_g \rho_h = E_g \rho_g \rho_h \subseteq 
\rho_g \rho_{g^{-1}} \rho_g \rho_h \subseteq 
\rho_g \rho_{g^{-1}} \rho_{gh} \subseteq 
\rho_g \rho_{g^{-1} g h} = \rho_g \rho_h
\]
and
\[
\rho_g \rho_h = \rho_g \rho_h E_{h^{-1}} \subseteq 
\rho_g \rho_h \rho_{h^{-1}} \rho_h \subseteq 
\rho_{gh} \rho_{h^{-1}} \rho_h \subseteq 
\rho_{g h h^{-1}} \rho_h  = \rho_g \rho_h
\]
showing that  $\rho_g \rho_h =
\rho_g \rho_{g^{-1}} \rho_{gh} = 
\rho_{gh} \rho_{h^{-1}} \rho_h$.

(ii)$\Rightarrow$(iii):
Put $h = e$ in (ii).

(iii)$\Rightarrow$(i):
Suppose that (iii) holds. For each $g \in G$,  
let $E_g$ denote the unit of the ideal 
$\rho_g \rho_{g^{-1}}$. Note that $E_g \subseteq \Delta$.
Take $g \in G$
and $(i,j) \in \rho_g$. Since $\rho$ is 
symmetrically $G$-graded, there are 
$k,l \in \overline{n}$ such that 
$(i,k),(l,j) \in \rho_g$ and $(k,l) \in \rho_{g^{-1}}$.
From $(i,k)(k,l) \in \rho_g \rho_{g^{-1}}$ and
$(k,l)(l,j) \in \rho_{g^{-1}} \rho_g$ it follows that
$E_g (i,k)(k,l) = (i,k)(k,l)$ and
$(k,l)(l,j) E_{g^{-1}} = (k,l)(l,j)$. Hence,
$E_g (i,j) = E_g (i,k)(k,l)(l,j) = (i,k)(k,l)(l,j)=(i,j)$
and
$(i,j) E_{g^{-1}} = (i,k)(k,l)(l,j) E_{g^{-1}} =
(i,k)(k,l)(l,j) = (i,j)$,
showing that $\rho$ is epsilon-strongly $G$-graded.
\end{proof}

\begin{prop}\label{prop:implicationsrho}
Suppose that $\rho$ is $G$-graded. 
Consider the following properties:
\begin{enumerate}[{\rm (i)}]

\item $\rho$ is symmetrically graded;

\item $\rho$ is epsilon-strongly graded;

\item $\rho$ is an epsilon-crossed product;

\item $\rho$ is strongly graded;

\item $\rho$ is a crossed product;

\item $\rho$ is trivially graded.

\end{enumerate}
These properties are related by the following implications:
\[
(v) \Rightarrow (iii) \Rightarrow (ii) \Rightarrow (i), \quad  
(v) \Rightarrow (iv) \Rightarrow (ii), \quad
\mbox{and} \quad (vi) \Rightarrow (iii).
\]
\end{prop}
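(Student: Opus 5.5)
We verify each implication directly from Definition~\ref{gradedr2}, using Proposition~\ref{prop:deltasubsetrho} and Proposition~\ref{prop:epsilon-characterization}. The most common tool is that for subsets $X\subseteq\rho$ we have $\Delta X = X = X\Delta$.

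\emph{(v)$\Rightarrow$(iv).} Suppose $U_gV_{g^{-1}}=\Delta$ with $U_g\subseteq\rho_g$ and $V_{g^{-1}}\subseteq\rho_{g^{-1}}$. Then
\[
\rho_e=\Delta\rho_e\subseteq U_gV_{g^{-1}}\rho_e\subseteq\rho_g\rho_{g^{-1}}\rho_e\subseteq\rho_e .
\]
Since also $\rho_g\rho_{g^{-1}}\subseteq\rho_e$, we get $\rho_g\rho_{g^{-1}}=\rho_e$.

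\emph{(iv)$\Rightarrow$(ii).} Set $E_g:=\Delta$. We have $\Delta\subseteq\rho_e=\rho_g\rho_{g^{-1}}$ by Proposition~\ref{prop:deltasubsetrho}(i), and $\Delta(i,j)=(i,j)=(i,j)\Delta$ for every $(i,j)$.

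\emph{(v)$\Rightarrow$(iii).} The argument just given shows (v)$\Rightarrow$(iv)$\Rightarrow$(ii), and it does so with $E_g=\Delta$. Indeed, by Remark~\ref{rem:epsilon-set} the epsilon set is forced to be $\rho_g\rho_{g^{-1}}\cap\Delta=\rho_e\cap\Delta=\Delta$. The crossed product sets $U_g,V_{g^{-1}}$ then satisfy $U_gV_{g^{-1}}=\Delta=E_g$ and $V_{g^{-1}}U_g=\Delta=E_{g^{-1}}$, so they witness the epsilon-crossed product property.

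\emph{(iii)$\Rightarrow$(ii).} This holds by definition.

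\emph{(ii)$\Rightarrow$(i).} This is part of Proposition~\ref{prop:epsilon-characterization}, namely (i)$\Rightarrow$(iii) there. One can also see it directly:
\[
\rho_g=E_g\rho_g\subseteq\rho_g\rho_{g^{-1}}\rho_g\subseteq\rho_g .
\]

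\emph{(vi)$\Rightarrow$(iii).} This is the only step needing a small case split, since for $g\neq e$ the relevant sets are empty. For $g\neq e$ we have $\rho_g=\emptyset=\rho_{g^{-1}}$. Take $E_g=\emptyset\subseteq\rho_g\rho_{g^{-1}}\cap\Delta$; the condition on elements of $\rho_g$ is vacuous, and $U_g=V_{g^{-1}}=\emptyset$ give $U_gV_{g^{-1}}=\emptyset=E_g$ and $V_{g^{-1}}U_g=\emptyset=E_{g^{-1}}$. For $g=e$, note that $\rho_e=\rho$, so $\rho_e\rho_e=\rho\rho=\rho$ by transitivity and reflexivity. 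Hence $E_e:=\Delta\subseteq\rho_e\rho_e\cap\Delta$ works, and $U_e=V_e=\Delta\subseteq\rho_e$ give $U_eV_e=\Delta=E_e$. This establishes all the displayed implications.

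The only point requiring care is consistency of the chosen $E_g$ in the pairing conditions for $g$ and $g^{-1}$, especially in the trivial case. Remark~\ref{rem:epsilon-set} settles this, since the epsilon sets are uniquely determined.
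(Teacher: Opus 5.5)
Your proof is correct and follows essentially the same route as the paper's. It uses $E_g=\Delta$ in the crossed product and strong cases, the witnesses $\emptyset$ and $\Delta$ in the trivial case, and Proposition~\ref{prop:epsilon-characterization} for (ii)$\Rightarrow$(i); routing (v)$\Rightarrow$(iii) through (iv) and (ii) instead of setting $E_g:=\Delta$ directly is only cosmetic. The one step you leave implicit is in (v)$\Rightarrow$(iv): passing from $\rho_e\subseteq\rho_g\rho_{g^{-1}}\rho_e$ to $\rho_e\subseteq\rho_g\rho_{g^{-1}}$ needs $\rho_{g^{-1}}\rho_e=\rho_{g^{-1}}$, which is Proposition~\ref{prop:deltasubsetrho}(ii); the paper uses $\rho_e\rho_g=\rho_g$ at the same point.
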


\begin{proof}
(v)$\Rightarrow$(iii): 
Let $\rho$ be a
crossed product with $U_g \subseteq \rho_g$
and $V_{g^{-1}} \subseteq \rho_{g^{-1}}$ such that
$U_g V_{g^{-1}} = V_{g^{-1}} U_g = \Delta$, 
for $g \in G$. Then $\rho$ is an
epsilon-crossed product with $E_g := \Delta$,
for $g \in G$.

(iii)$\Rightarrow$(ii): 
This is trivial.

(ii)$\Rightarrow$(i):
This follows from Proposition~\ref{prop:epsilon-characterization}.

(v)$\Rightarrow$(iv): 
Suppose that $\rho$ is a 
crossed product. Take $g \in G$. 
Since $\rho_e \Delta = \rho_e$, it follows that
$\rho_e = \rho_e \Delta = 
\rho_e U_g V_{g^{-1}} \subseteq \rho_e \rho_g 
\rho_{g^{-1}} \subseteq \rho_{e g g^{-1}} = \rho_e$.
Therefore, $\rho_e = \rho_e \rho_g \rho_{g^{-1}} = \rho_g \rho_{g^{-1}}$,
showing that $\rho$ is strongly $G$-graded.

(iv)$\Rightarrow$(ii): 
Suppose that $\rho$ is strongly graded.
It is easy to see that $\rho$ is 
epsilon-strongly graded with $E_g := \Delta,$
for all $g \in G$.

(vi)$\Rightarrow$(iii):
Suppose that $\rho$ is trivially graded.
Then $\rho$ is an epsilon-crossed product with
$U_e = V_e = E_e = \Delta$ and 
$U_g = V_g = E_g = \emptyset$, for 
$g \in G \setminus \{ e \}$.
\end{proof}

\begin{exmp}
The following examples show that none of the implications in
Proposition~\ref{prop:implicationsrho} can be reversed in general.

(a) (iii)$\not\Rightarrow$(v).
Let $\rho:=\overline{1}^{\,2}=\{(1,1)\}$, and equip $\rho$ 
with the trivial $\Z_2$-grading
$\rho_0 := \rho$ and $\rho_1 := \emptyset$.
By Proposition~\ref{prop:implicationsrho}, this grading is an
epsilon-crossed product. It is not a crossed product, 
since there are no subsets
$U_1 , V_1 \subseteq \rho_1 = \emptyset$ with
$U_1 V_1 = \Delta$.

(b) (ii)$\not\Rightarrow$(iii) and
(ii)$\not\Rightarrow$(iv).
Let $\rho$ be the preorder on $\overline{4}$ defined by
$\rho:=\overline{3}^{\,2}\cup\{(4,4)\}$.
Define a $\Z_2$-grading on $\rho$ by
\[ 
\rho_0 := \{(1,1),(2,2),(2,3),(3,2),(3,3),(4,4)\}
\quad \mbox{and} \quad \rho_1 := \{(1,2),(1,3),(2,1),(3,1)\}.
\]
This grading is epsilon-strong with
$E_0 := \Delta$ and $E_1 := \{(1,1),(2,2),(3,3)\}$.
However, $\rho_1 \rho_1 = \rho_0\setminus\{(4,4)\}
\neq \rho_0$, so the grading is not strong.
We claim that it is not an epsilon-crossed product. 
Indeed, by Remark~\ref{rem:epsilon-set},
the
epsilon set in degree $1$ must be
$E_1=\rho_1\rho_1\cap\Delta
=\{(1,1),(2,2),(3,3)\}$.
Suppose that there are subsets
$U_1,V_1\subseteq\rho_1$ such that $U_1 V_1 = E_1$.
Since $(2,2)\in U_1V_1$, we must have
$(2,1) \in U_1$ and $(1,2) \in V_1$.
Similarly, since $(3,3)\in U_1V_1$, we must have
$(3,1) \in U_1$ and $(1,3) \in V_1$. It follows that
$(2,3) = (2,1)(1,3)\in U_1 V_1$, contrary to the 
equality $U_1 V_1 = E_1$. Thus the grading is not an
epsilon-crossed product.

(c) (i)$\not\Rightarrow$(ii).
Let $\rho$ be the preorder on $\overline{4}$ defined by
\[
\Delta \cup
\{ (1,2),(1,3),(1,4),(2,3),(2,4), (3,2),(3,4),(4,2),(4,3) \}.
\]
Define a $\Z_3$-grading on $\rho$ by
\[
\rho_0 := \Delta\cup\{(1,3),(2,4),(4,2)\},
\
\rho_1 := \{(1,2),(1,4),(3,2),(3,4)\}
\ \mbox{and} \ 
\rho_2 := \{(2,3),(4,3) \}.
\]
A direct computation gives
$\rho_0\rho_0\rho_0=\rho_0$,
$\rho_1\rho_2\rho_1=\rho_1$ and
$\rho_2\rho_1\rho_2=\rho_2$.
Hence the grading is symmetric.
It is not epsilon-strong. Indeed,
$\rho_1 \rho_2 = \{(1,3),(3,3)\}$.
If there were a subset
$E_1 \subseteq (\rho_1\rho_2) \cap \Delta$
such that $E_1 (1,2) = (1,2)$, then $(1,1)$ would 
belong to $E_1$. This is impossible, since
$(1,1) \notin \rho_1\rho_2$.

(d) (iv)$\not\Rightarrow$(v).
Let $\rho:=\overline{3}^{\,2}$, and define a $\Z_2$-grading on $\rho$ by
\[
\rho_0 := \{(1,1),(2,2),(2,3),(3,2),(3,3)\}
\quad \mbox{and} \quad
\rho_1 := \{(1,2),(1,3),(2,1),(3,1)\}.
\]
Since $\rho_0 \rho_0 = \rho_0$ and $\rho_1\rho_1 = \rho_0$,
the grading is strong.
Suppose, seeking a contradiction, that the grading is a 
crossed product. Then there are subsets
$U_1 , V_1 \subseteq \rho_1$ such that
$U_1 V_1 = V_1 U_1 = \Delta$.
Since $(2,2)$ belongs to both products, it follows that
$(2,1),(1,2)\in U_1\cap V_1$.
Similarly, since $(3,3)$ belongs to both products, we obtain
$(3,1),(1,3) \in U_1 \cap V_1$. Therefore,
$U_1 = V_1 = \rho_1$. Consequently,
$U_1 V_1 = \rho_1 \rho_1 = \rho_0 \neq \Delta$,
which is a contradiction. Hence the grading is 
not a crossed product.

(e) (iii)$\not\Rightarrow$(vi).
Let $\rho:=\overline{2}^{\,2}$, and define a $\Z_2$-grading on $\rho$ by $\rho_0: = \Delta$ and 
$\rho_1:=\{(1,2),(2,1)\}$. Put $U_0 = V_0: =\Delta$
and $U_1 = V_1: = \rho_1$. Then
$U_0 V_0 = V_0 U_0 = \Delta$ and
$U_1 V_1 = V_1 U_1 = \Delta$.
Thus the grading is a crossed product and hence 
an epsilon-crossed
product. Since $\rho_1\neq\emptyset$, it is not trivial.
\end{exmp}

Recall that a binary relation $\rho$ on $\overline{n}$ 
is called \emph{symmetric} if $\rho^{-1}=\rho$. 
Since $\rho$ is already assumed to be a preorder, it is 
an equivalence relation if and only if it is symmetric.

\begin{lemma}\label{lem:TransposeInverse}
Suppose that $\rho$ is $G$-graded.
If $(i,j) \in \rho_g$ and $(j,i) \in \rho$, then $(j,i) \in \rho_{g^{-1}}$.
\end{lemma}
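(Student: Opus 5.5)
Since $\rho$ is graded and $(j,i)\in\rho$, the pair $(j,i)$ lies in $\rho_h$ for some $h\in G$, because the sets $\rho_h$ cover $\rho$. The plan is to show $h=g^{-1}$ by multiplying the two pairs together and using that the diagonal sits in the neutral component.

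First, using the partial multiplication on $\overline{n}^{\,2}$, we have
\[
(i,i)=(i,j)(j,i)\in\rho_g\rho_h\subseteq\rho_{gh}.
\]
Next, by Proposition~\ref{prop:deltasubsetrho}(i), $(i,i)\in\Delta\subseteq\rho_e$. Thus $(i,i)\in\rho_{gh}\cap\rho_e$. Because the components $\rho_k$, $k\in G$, are pairwise disjoint, this forces $gh=e$, that is, $h=g^{-1}$. Hence $(j,i)\in\rho_{g^{-1}}$.

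The argument is short and has no real obstacle. The only point needing care is that the hypothesis $(j,i)\in\rho$ is what places $(j,i)$ in some component $\rho_h$ at all; the rest is the disjointness argument already used in the proof of Proposition~\ref{prop:deltasubsetrho}.
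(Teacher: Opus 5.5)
Your proof is correct and is essentially the paper's own argument: place $(j,i)$ in some $\rho_h$, observe that $(i,i)=(i,j)(j,i)\in\rho_{gh}\cap\rho_e$, and conclude $gh=e$ from pairwise disjointness. The paper likewise uses Proposition~\ref{prop:deltasubsetrho}(i) to get $\Delta\subseteq\rho_e$.
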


\begin{proof}
Suppose that $(i,j) \in \rho_g$ and $(j,i) \in \rho$.
Then $(j,i) \in \rho_h$ for some $h\in G$.
Note that $\rho_e \supseteq \Delta \ni (i,i) = (i,j) (j,i) \in \rho_g \rho_h \subseteq \rho_{gh}$.
Thus, $e=gh$, showing that $h=g^{-1}$. 
\end{proof}

\begin{prop}\label{prop:i-ii-iii}
Suppose that $\rho$ is $G$-graded.
The following assertions are equivalent:
\begin{enumerate}[{\rm (i)}]

\item $\rho$ is an equivalence relation;

\item $\rho_e$ is an equivalence relation
and $\rho$ is epsilon-strongly $G$-graded;

\item $\rho_e$ is an equivalence relation
and $\rho$ is symmetrically $G$-graded;

\item for each $g \in G$, the equality
$(\rho_g)^{-1} = \rho_{g^{-1}}$ holds.

\end{enumerate}
\end{prop}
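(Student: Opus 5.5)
I would prove the cycle (i)$\Rightarrow$(iv)$\Rightarrow$(ii)$\Rightarrow$(iii)$\Rightarrow$(i).

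\emph{(i)$\Rightarrow$(iv).} Let $(i,j)\in\rho_g$. Since $\rho$ is symmetric, $(j,i)\in\rho$, so Lemma~\ref{lem:TransposeInverse} gives $(j,i)\in\rho_{g^{-1}}$. Hence $(\rho_g)^{-1}\subseteq\rho_{g^{-1}}$ for every $g$. Applying this with $g^{-1}$ in place of $g$ and inverting gives the reverse inclusion, so $(\rho_g)^{-1}=\rho_{g^{-1}}$.

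\emph{(iv)$\Rightarrow$(ii).} Taking $g=e$ gives $\rho_e^{-1}=\rho_e$. Also $\rho_e$ is reflexive by Proposition~\ref{prop:deltasubsetrho}(i) and transitive since $\rho_e\rho_e\subseteq\rho_e$, so $\rho_e$ is an equivalence relation. For epsilon-strongness, fix $g$ and put $E_g:=\rho_g\rho_{g^{-1}}\cap\Delta$. If $(i,j)\in\rho_g$, then $(j,i)\in\rho_{g^{-1}}$ by (iv), so $(i,i)=(i,j)(j,i)\in E_g$. Moreover $(j,j)=(j,i)(i,j)\in\rho_{g^{-1}}\rho_g\cap\Delta=E_{g^{-1}}$. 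Since $E_g$ and $E_{g^{-1}}$ lie in $\Delta$, we get $E_g(i,j)=(i,j)$ and $(i,j)E_{g^{-1}}=(i,j)$. These are set equalities: the only element of $E_g(i,j)$ is $(i,j)$, and it is present because $(i,i)\in E_g$.

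\emph{(ii)$\Rightarrow$(iii).} This follows directly from Proposition~\ref{prop:implicationsrho}.

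\emph{(iii)$\Rightarrow$(i).} This is the step needing an actual idea. Let $(i,j)\in\rho_g$; we must show $(j,i)\in\rho$. By symmetry, $\rho_g=\rho_g\rho_{g^{-1}}\rho_g$, so there are $k,l$ with $(i,k)\in\rho_g$, $(k,l)\in\rho_{g^{-1}}$ and $(l,j)\in\rho_g$. Then
\[
(i,l)=(i,k)(k,l)\in\rho_e \quad\text{and}\quad (k,j)=(k,l)(l,j)\in\rho_e .
\]
Since $\rho_e$ is symmetric, $(l,i)\in\rho_e$ and $(j,k)\in\rho_e$. Transitivity of the preorder $\rho$ then gives
\[
(j,i)=(j,k)(k,l)(l,i)\in\rho .
\]
Hence $\rho$ is symmetric, and so it is an equivalence relation.

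Only the choice of factorisation in (iii)$\Rightarrow$(i) requires care; every other step is a direct application of definitions and the earlier results.
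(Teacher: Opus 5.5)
Your proof is correct. It uses the same two ingredients as the paper: Lemma~\ref{lem:TransposeInverse}, and the factorisation $(i,j)=(i,k)(k,l)(l,j)$ with $(i,k),(l,j)\in\rho_g$ and $(k,l)\in\rho_{g^{-1}}$ coming from symmetry of the grading. What differs is the order of the cycle.

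The paper proves (i)$\Rightarrow$(ii)$\Rightarrow$(iii)$\Rightarrow$(iv)$\Rightarrow$(i). In (i)$\Rightarrow$(ii) it invokes the lemma twice: once for the symmetry of $\rho_e$, and once for $(j,i)\in\rho_{g^{-1}}$. In (iii)$\Rightarrow$(iv) it carries the factorisation further, writing $(j,l)=(j,k)(k,l)\in\rho_e\rho_{g^{-1}}\subseteq\rho_{g^{-1}}$ and $(j,i)=(j,l)(l,i)\in\rho_{g^{-1}}\rho_e=\rho_{g^{-1}}$ via Proposition~\ref{prop:deltasubsetrho}(ii). This makes (iv)$\Rightarrow$(i) a one-line computation of $\rho^{-1}$.

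You instead stop at $(j,i)\in\rho$, using only transitivity of $\rho$, and let the lemma recover the degree in (i)$\Rightarrow$(iv). In exchange, the symmetry of $\rho_e$ in (iv)$\Rightarrow$(ii) comes for free by taking $g=e$. The paper's ordering shows directly that a symmetric grading with $\rho_e$ an equivalence relation already forces the graded statement (iv). Yours puts all the degree bookkeeping into the lemma and keeps the (iii) step purely about the underlying relation. Neither version is appreciably shorter.
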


\begin{proof}
(i)$\Rightarrow$(ii):
Suppose that (i) holds. 
It is easy to see that $\rho_e$ is reflexive and transitive.
The symmetry of $\rho_e$ follows from Lemma~\ref{lem:TransposeInverse}.
Now, take $g \in G$ and $(i,j) \in \rho_g$.
By Lemma~\ref{lem:TransposeInverse}, $(j,i) \in \rho_{g^{-1}}$.
Set $E_g:=\rho_g\rho_{g^{-1}}\cap \Delta.$
Note that $(i,i)=(i,j)(j,i) \in E_g$ and $(j,j)=(j,i)(i,j) \in E_{g^{-1}}$.
Clearly, $E_g (i,j) = (i,j) = (i,j)E_{g^{-1}}$.
This shows that $\rho$ is epsilon-strongly $G$-graded.

(ii)$\Rightarrow$(iii): This follows from Proposition~\ref{prop:implicationsrho}.

(iii)$\Rightarrow$(iv):
Suppose that (iii) holds. Let $g\in G$ and 
$(i,j)\in\rho_g$. Since the grading is symmetric, 
$(i,j) \in \rho_g \rho_{g^{-1}} \rho_g$. 
Hence there are $k,l\in\overline{n}$ with 
$(i,k) \in \rho_g$, $(k,l) \in \rho_{g^{-1}}$ and
$(l,j) \in \rho_g$. 
The grading property gives 
$(i,l) = (i,k) (k,l) \in \rho_e$ and
$(k,j) = (k,l)(l,j) \in \rho_e$.
Since $\rho_e$ is symmetric, 
$(l,i)\in\rho_e$ and $(j,k) \in \rho_e$. 
Therefore, $(j,l) = (j,k)(k,l) \in \rho_{g^{-1}}$,
and hence $(j,i) = (j,l)(l,i) \in \rho_{g^{-1}}\rho_e 
= \rho_{g^{-1}}$, where the last equality follows from
Proposition~\ref{prop:deltasubsetrho}(ii). 
Thus  $(\rho_g)^{-1} \subseteq \rho_{g^{-1}}$.
Replacing $g$ by $g^{-1}$ gives
$(\rho_{g^{-1}})^{-1} \subseteq \rho_g$.
Taking inverses yields 
$\rho_{g^{-1}}\subseteq(\rho_g)^{-1}$.
Consequently, $(\rho_g)^{-1} = \rho_{g^{-1}}$.

(iv)$\Rightarrow$(i):
Suppose that (iv) holds. 
Then $\rho^{-1} = 
\left( \cup_{g \in G} \rho_g \right)^{-1} = 
\cup_{g \in G} \left( \rho_g \right)^{-1} = 
\cup_{g \in G} \rho_{g^{-1}} = \rho$
so that $\rho$ is symmetric, and hence an equivalence relation.
\end{proof}

\begin{cor}\label{full}
Every $G$-grading on $\overline{n}^{\,2}$ is epsilon-strong.
\end{cor}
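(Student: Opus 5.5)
The plan is to derive the statement directly from Proposition~\ref{prop:i-ii-iii}, applied with $\rho=\overline{n}^{\,2}$. Suppose $(\rho_g)_{g\in G}$ is a $G$-grading on $\overline{n}^{\,2}$.

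First I check that $\overline{n}^{\,2}$ is a preorder, so that it fits the standing hypothesis of this section. It contains $\Delta$, and $\overline{n}^{\,2}\,\overline{n}^{\,2}\subseteq\overline{n}^{\,2}$ holds trivially. Next I check that it is symmetric, since $(\overline{n}^{\,2})^{-1}=\overline{n}^{\,2}$. Together these show that $\overline{n}^{\,2}$ is an equivalence relation on $\overline{n}$, which is exactly assertion (i) of Proposition~\ref{prop:i-ii-iii}.

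That proposition then gives (ii): $\rho_e$ is an equivalence relation and the grading is epsilon-strong. In particular the grading is epsilon-strong. If one wants the epsilon sets explicitly, they are $E_g=\rho_g\rho_{g^{-1}}\cap\Delta$, as in the proof of (i)$\Rightarrow$(ii) and by Remark~\ref{rem:epsilon-set}.

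None of these steps is a real obstacle, because all of the content sits in the implication (i)$\Rightarrow$(ii). The underlying reason is Lemma~\ref{lem:TransposeInverse}: if $(i,j)\in\rho_g$, then $(j,i)\in\rho_{g^{-1}}$, because $(j,i)$ automatically lies in $\overline{n}^{\,2}$. Consequently $(i,i)\in E_g$ and $(j,j)\in E_{g^{-1}}$, which gives $E_g(i,j)=(i,j)=(i,j)E_{g^{-1}}$.
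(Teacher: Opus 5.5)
Your proposal is correct and matches the paper's proof: the paper also notes that $\overline{n}^{\,2}$ is an equivalence relation and applies the implication (i)$\Rightarrow$(ii) of Proposition~\ref{prop:i-ii-iii}. Your extra remarks, that $E_g=\rho_g\rho_{g^{-1}}\cap\Delta$ and that Lemma~\ref{lem:TransposeInverse} does the work, are accurate but not needed for the corollary itself.
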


\begin{proof}
Since $\overline{n}^{\,2}$ is an equivalence relation, the result
follows from Proposition~\ref{prop:i-ii-iii}.
\end{proof}

The epsilon-strong gradings arising from equivalence 
relations need not be trivial, as the following example shows.

\begin{exmp}
There exist epsilon-strong gradings on equivalence relations that are
not trivial. Indeed, let $n:=4$ and consider the equivalence relation
\[
\rho
:=
\{(i,j)\mid 1\leq i,j\leq 2\}
\cup
\{(i,j)\mid 3\leq i,j\leq 4\}.
\]
Thus the equivalence classes of $\rho$ are $\{1,2\}$ and
$\{3,4\}$. Let $G := \langle g\mid g^2=e\rangle$
be the cyclic group of order $2$, and define a $G$-grading on $\rho$ by $\rho_e := \{(1,1),(2,2),(3,3),(4,4)\}$
and $\rho_g := \{(1,2),(2,1),(3,4),(4,3)\}$.
Since $\rho$ is an equivalence relation,
Proposition~\ref{prop:i-ii-iii} shows that the grading is
epsilon-strong. However, it is not trivial, since
$\rho_g \neq \emptyset$.
\end{exmp}

The situation is entirely different when the underlying
preorder is antisymmetric.

\begin{prop}\label{esgo}
Every epsilon-strong $G$-grading on a partial order on
$\overline{n}$ is trivial.
\end{prop}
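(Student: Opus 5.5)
Let $\rho$ be a partial order on $\overline{n}$ with an epsilon-strong $G$-grading $(\rho_g)_{g\in G}$, and fix $g\in G\setminus\{e\}$. The plan is to show directly that $\rho_g=\emptyset$. The key observation is that an epsilon set $E_g$ consists of diagonal pairs lying in $\rho_g\rho_{g^{-1}}$. Under antisymmetry, such diagonal pairs can only arise from pairs in $\rho_g$ that are themselves diagonal, and $\Delta\subseteq\rho_e$ rules this out when $g\neq e$.

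The key steps, in order, are as follows. First, suppose for contradiction that $(i,j)\in\rho_g$. By the epsilon-strong property, $(i,j)=E_g(i,j)$. Since $E_g\subseteq\Delta$, the only element of $E_g$ that can be composed with $(i,j)$ on the left is $(i,i)$, so $(i,i)\in E_g\subseteq\rho_g\rho_{g^{-1}}$. Second, unpack this membership: there is some $k\in\overline{n}$ with $(i,k)\in\rho_g$ and $(k,i)\in\rho_{g^{-1}}\subseteq\rho$. Both $(i,k)$ and $(k,i)$ lie in $\rho$, so antisymmetry forces $k=i$. Third, conclude that $(i,i)\in\rho_g$. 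By Proposition~\ref{prop:deltasubsetrho}(i) we also have $(i,i)\in\rho_e$, and since the components are pairwise disjoint, $g=e$, a contradiction. Hence $\rho_g=\emptyset$ for all $g\neq e$, so the grading is trivial.

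There is no real obstacle; the only point needing care is the second step, namely extracting the witness $k$ from $(i,i)\in\rho_g\rho_{g^{-1}}$ and applying antisymmetry to the pair $(i,k),(k,i)$. As an alternative to the last step, Lemma~\ref{lem:TransposeInverse} also gives the conclusion: from $(i,i)\in\rho_g$ and $(i,i)\in\rho$ it yields $(i,i)\in\rho_{g^{-1}}$, and disjointness together with $(i,i)\in\rho_e$ still forces $g=e$.
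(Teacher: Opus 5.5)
Your proposal is correct and follows the paper's proof essentially step for step: you extract $(i,i)\in E_g\subseteq\rho_g\rho_{g^{-1}}$ from $E_g(i,j)=(i,j)$, use antisymmetry to force the witness $k$ to equal $i$, and then conclude $g=e$ from $(i,i)\in\rho_g\cap\rho_e$ and disjointness. The paper cites Remark~\ref{rem:epsilon-set} for the form of $E_g$, but your direct use of $E_g\subseteq\rho_g\rho_{g^{-1}}\cap\Delta$ is enough; your alternative via Lemma~\ref{lem:TransposeInverse} is harmless but unnecessary, since $(i,i)\in\rho_g\cap\rho_e$ already settles the matter.
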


\begin{proof}
Let $\rho$ be a partial order on $\overline{n}$ equipped 
with an epsilon-strong $G$-grading, and let $g\in G$. 
Suppose that
$\rho_g\neq\emptyset$, and choose $(i,j)\in\rho_g$.
By epsilon-strongness and Remark~\ref{rem:epsilon-set}, 
$E_g=
\bigl(\rho_g\rho_{g^{-1}}\bigr)\cap\Delta$
with
$E_g (i,j) = (i,j)$. 
Hence,
$(i,i) \in E_g \subseteq \rho_g \rho_{g^{-1}}$, 
so there is some $k\in\overline{n}$ such that
$(i,k) \in \rho_g$ and $(k,i)\in\rho_{g^{-1}}$.
In particular, $(i,k),(k,i)\in\rho$. Since $\rho$ 
is antisymmetric, we obtain $i=k$. Thus
$(i,i) \in \rho_g$. On the other hand,
Proposition~\ref{prop:deltasubsetrho}(i) gives
$(i,i)\in\Delta\subseteq\rho_e$.
Since the homogeneous components are pairwise disjoint, 
it follows that $g=e$.
Thus, $\rho_g=\emptyset$ for $g\in G\setminus\{e\}$, and 
so the grading is trivial.
\end{proof}

\section{Graded structural matrix rings}\label{structural}

Throughout this section, we let $R$ be a nonzero 
associative unital ring,
and we denote its multiplicative identity by $1$. 
Let $M_n(R)$ denote 
the ring of $n\times n$ matrices over $R$. For each 
$(i,j)\in\overline{n}^{\,2}$, let $e_{i,j}$ denote the 
matrix having $1$ in the $(i,j)$-entry and zeros elsewhere. 
As in the preceding section, $\rho$ denotes a preorder 
on $\overline{n}$.

\begin{defn}
The \emph{structural matrix ring}
$M_n(\rho,R)$ induced by the relation $\rho$
is defined to be the additive subgroup 
$\sum_{(i,j) \in \rho} R e_{i,j}$ of 
$M_n(R)$ equipped with ordinary matrix
multiplication.
\end{defn}

Since $\rho$ is reflexive, 
$1_\Delta := \sum_{i=1}^n e_{i,i}$
belongs to $M_n(\rho,R)$ and is its multiplicative identity.

\begin{prop}\label{prop:rhodefinesS}
Suppose that $\rho$ is $G$-graded. If we put
$M_n(\rho,R)_g := \sum_{(i,j) \in \rho_g} R e_{i,j}$
for all $g \in G$, then this defines a
$G$-grading on $M_n(\rho,R)$.
\end{prop}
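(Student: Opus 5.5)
To prove Proposition~\ref{prop:rhodefinesS}, I will verify the two defining conditions of a $G$-grading directly from the definition of $M_n(\rho,R)$ as the set of matrices supported on $\rho$.

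The first condition is that $M_n(\rho,R)=\bigoplus_{g\in G}M_n(\rho,R)_g$.
\begin{itemize}
\item Each $M_n(\rho,R)_g$ is an additive subgroup of $M_n(\rho,R)$, since $\rho_g\subseteq\rho$.
\item Every element of $M_n(\rho,R)$ can be written uniquely as $\sum_{(i,j)\in\rho} r_{i,j}e_{i,j}$ with $r_{i,j}\in R$.
\item Since $\rho=\bigcup_{g\in G}\rho_g$ with the $\rho_g$ pairwise disjoint, this sum splits as $\sum_{g\in G}\sum_{(i,j)\in\rho_g} r_{i,j}e_{i,j}$. Only finitely many terms are nonzero, because $\rho$ is finite.
\end{itemize}
This gives $M_n(\rho,R)=\sum_g M_n(\rho,R)_g$. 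For directness, suppose $\sum_g x_g=0$ with $x_g\in M_n(\rho,R)_g$. Look at the $(i,j)$-entry for any $(i,j)\in\rho$. By disjointness, $(i,j)$ lies in exactly one $\rho_g$, so only $x_g$ can contribute to that entry. Hence each entry of each $x_g$ vanishes, and every $x_g=0$.

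The second condition is the multiplicativity $M_n(\rho,R)_gM_n(\rho,R)_h\subseteq M_n(\rho,R)_{gh}$. By biadditivity of matrix multiplication, it suffices to check products of the form $(re_{i,j})(se_{k,l})$ with $(i,j)\in\rho_g$ and $(k,l)\in\rho_h$.
\begin{itemize}
\item If $j\neq k$, the product is $0$, which lies in $M_n(\rho,R)_{gh}$.
\item If $j=k$, the product is $rs\,e_{i,l}$. Here $(i,l)=(i,j)(j,l)\in\rho_g\rho_h\subseteq\rho_{gh}$, so the product lies in $M_n(\rho,R)_{gh}$.
\end{itemize}

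There is no real obstacle. The only point needing care is the directness of the sum, and it follows immediately from the pairwise disjointness of the $\rho_g$. It is worth noting that we need no assumption that the identity $1_\Delta$ is homogeneous. Nonetheless, it is automatically homogeneous: $1_\Delta$ lies in $M_n(\rho,R)_e$ by Proposition~\ref{prop:deltasubsetrho}(i).
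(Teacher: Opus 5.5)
Your proof is correct and takes essentially the same route as the paper. Directness comes from $\rho=\bigcup_{g}\rho_g$ with the $\rho_g$ pairwise disjoint, and multiplicativity comes from reducing to products of matrix units and using $\rho_g\rho_h\subseteq\rho_{gh}$; you merely spell out the entrywise directness argument that the paper states in one line.
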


\begin{proof}
Put $S := M_n(\rho,R)$, and $S_g := M_n(\rho,R)_g$
for all $g \in G$.
Since the subsets $\rho_g$, $g\in G$, are pairwise 
disjoint and their union is $\rho$, we have 
$S = \bigoplus_{g\in G}S_g$ as additive groups.
Take $g,h \in G$. Then
\[ 
\begin{array}{rcl}
S_g S_h &=& 
\left( \sum_{(i,j) \in \rho_g} R e_{i,j} \right)
\left( \sum_{(i',j') \in \rho_h} R e_{i',j'} \right) 
= \sum_{(i,j) \in \rho_g} \sum_{(i',j') \in \rho_h} 
R e_{i,j} Re_{i',j'} \\
&=&  \sum_{(i'',j'') \in \rho_g \rho_h} R e_{i'',j''}  
\ \subseteq \ \sum_{(i'',j'') \in \rho_{gh}} 
R e_{i'',j''} \ = \ S_{gh}. 
\end{array} 
\]
This shows that $S$ is a $G$-graded ring.
\end{proof}

\begin{defn}\label{def:VeryGood}
A $G$-grading on $M_n(\rho,R)$ is called \emph{very good} if it is
induced by a $G$-grading on $\rho$ as in
Proposition~\ref{prop:rhodefinesS}.
\end{defn}

\begin{remark}\label{gradedrho}
(a)
Suppose that $M_n(\rho,R)$ is equipped with a very good $G$-grading.
Note that, for any $g\in G$, we have $(i,j)\in \rho_g$ if and only if $Re_{i,j}\subseteq M_n(\rho,R)_g.$

(b)
Definition~\ref{def:VeryGood} generalizes \cite[Def.~2.1(ii)]{LOOP}.
Indeed, if $\rho = \overline{n}^2$,
then a very good $G$-grading on 
$M_n(\rho,R) = M_n(R)$, in the sense of Definition~\ref{def:VeryGood}, is exactly a 
very good $G$-grading on $M_n(R)$, in the sense 
of \cite[Def.~2.1(ii)]{LOOP}.
\end{remark}

For the remainder of this section, we assume that 
$M_n(\rho,R)$ is
equipped with a very good $G$-grading induced by a $G$-grading
$(\rho_g)_{g\in G}$ on $\rho$.

\begin{remark}
Note that $1_\Delta \in M_n(\rho,R)_e$. Indeed, 
Proposition~\ref{prop:deltasubsetrho}(i) gives
$\Delta\subseteq\rho_e$, and therefore
$1_\Delta=\sum_{i=1}^n e_{i,i}\in M_n(\rho,R)_e$. See also \cite[Prop.~1.1.1]{nas04}.
\end{remark}

\begin{theorem}\label{relring}
The following assertions hold:
\begin{enumerate}[{\rm (i)}]

\item the $G$-grading on $\rho$ is trivial if and only if the induced
$G$-grading on $M_n(\rho,R)$ is trivial;

\item the $G$-grading on $\rho$ is symmetric if and only if the
induced $G$-grading on $M_n(\rho,R)$ is symmetric;

\item the $G$-grading on $\rho$ is epsilon-strong if and only if the
induced $G$-grading on $M_n(\rho,R)$ is epsilon-strong;

\item the $G$-grading on $\rho$ is strong if and only if the induced
$G$-grading on $M_n(\rho,R)$ is strong;

\item if the $G$-grading on $\rho$ is an epsilon-crossed product,
then the induced $G$-grading on $M_n(\rho,R)$ is an
epsilon-crossed product;

\item if the $G$-grading on $\rho$ is a crossed product, then the
induced $G$-grading on $M_n(\rho,R)$ is a crossed product.

\end{enumerate}
\end{theorem}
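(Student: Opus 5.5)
The whole argument rests on one dictionary between products of subsets of $\rho$ and products of components of $S:=M_n(\rho,R)$. For subsets $X,Y,Z\subseteq\rho$, put $S(X):=\sum_{(i,j)\in X}Re_{i,j}$. The computation in the proof of Proposition~\ref{prop:rhodefinesS} gives $S(X)S(Y)=S(XY)$, and iterating gives $S(X)S(Y)S(Z)=S(XYZ)$. This uses $R$ being unital: $Re_{i,j}Re_{j,k}=Re_{i,k}$ because $1\cdot 1=1$. We also have $S(X)=S(Y)$ if and only if $X=Y$, since $R\neq 0$ and the $e_{i,j}$ form a basis of free summands. Applied to $S_g=S(\rho_g)$, this yields
\[
S_gS_{g^{-1}}=S(\rho_g\rho_{g^{-1}}),\qquad S_gS_{g^{-1}}S_g=S(\rho_g\rho_{g^{-1}}\rho_g),\qquad S_e=S(\rho_e).
\]
With this dictionary, (i), (ii) and (iv) follow at once: $S_g=0$ if and only if $\rho_g=\emptyset$, and the symmetric and strong conditions translate verbatim in both directions.

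For the forward direction of (iii), I would take $E_g=\rho_g\rho_{g^{-1}}\cap\Delta$ and set $\epsilon_g:=\sum_{(i,i)\in E_g}e_{i,i}$. Then $\epsilon_g\in S(\rho_g\rho_{g^{-1}})=S_gS_{g^{-1}}$. For $re_{i,j}$ with $(i,j)\in\rho_g$, the condition $E_g(i,j)=(i,j)$ gives $(i,i)\in E_g$, so $\epsilon_g re_{i,j}=re_{i,j}$. Similarly $(j,j)\in E_{g^{-1}}$ gives $re_{i,j}\epsilon_{g^{-1}}=re_{i,j}$, and linearity handles all of $S_g$.

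The converse of (iii) is the main obstacle, since $\epsilon_g$ is an arbitrary element of $S_gS_{g^{-1}}$ rather than a diagonal idempotent. I would write $\epsilon_g=\sum_{(k,l)\in\rho_g\rho_{g^{-1}}}a_{k,l}e_{k,l}$. For $(i,j)\in\rho_g$, the equation $\epsilon_g e_{i,j}=e_{i,j}$ compared at entry $(i,j)$ gives $a_{i,i}=1$, which forces $(i,i)\in\rho_g\rho_{g^{-1}}$. Likewise, the equation $e_{i,j}\epsilon_{g^{-1}}=e_{i,j}$ forces $(j,j)\in\rho_{g^{-1}}\rho_g$. I would then set $E_g:=\rho_g\rho_{g^{-1}}\cap\Delta$. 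The conditions $E_g(i,j)=(i,j)=(i,j)E_{g^{-1}}$ then hold, because the products on the left consist exactly of $(i,j)$ once $(i,i)\in E_g$ and $(j,j)\in E_{g^{-1}}$. Hence the grading on $\rho$ is epsilon-strong.

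For (v), I would take $s_g:=\sum_{(i,j)\in U_g}e_{i,j}$ and $t_{g^{-1}}:=\sum_{(j,k)\in V_{g^{-1}}}e_{j,k}$. The product $s_gt_{g^{-1}}$ has $(i,k)$-entry equal to the number of $j$ with $(i,j)\in U_g$ and $(j,k)\in V_{g^{-1}}$, and we need this to equal the epsilon element $\sum_{E_g}e_{i,i}$. The condition $U_gV_{g^{-1}}=E_g$ handles the support, but the multiplicities could exceed $1$. I would therefore first shrink $U_g,V_{g^{-1}}$ so that the path counts are $0$ or $1$. The inclusion $U_gV_{g^{-1}}\subseteq\Delta$ means every $(i,j)\in U_g$ pairs only with elements $(j,i)$ of $V_{g^{-1}}$. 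If some $i$ had two such partners $j$ and $j'$, then $(j,j')\in V_{g^{-1}}U_g=E_{g^{-1}}\subseteq\Delta$ would force $j=j'$. By the same argument each $j$ has at most one partner $i$, so every count is at most $1$ and no shrinking is needed. Then $s_gt_{g^{-1}}=\epsilon_g$ and $t_{g^{-1}}s_g=\epsilon_{g^{-1}}$, the latter with the epsilon elements from (iii).

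For (vi), I would apply the same construction with $E_g=\Delta$. This gives $s_gt_{g^{-1}}=t_{g^{-1}}s_g=1_\Delta$, so $s_g\in S_g$ is invertible.
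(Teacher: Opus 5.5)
Your proof is correct and follows the paper's argument. The component-product identity $S_gS_h=\bigoplus_{(i,j)\in\rho_g\rho_h}Re_{i,j}$ settles (i), (ii) and (iv), and (iii) uses the same diagonal idempotents and entry comparison; you take $E_g=\rho_g\rho_{g^{-1}}\cap\Delta$ instead of the set of diagonal positions where $\epsilon_g$ has coefficient $1$, which makes no difference. In (v) you keep all of $U_g$ and $V_{g^{-1}}$ and bound the path multiplicities directly, rather than passing to the paper's subsets $\widetilde U_g,\widetilde V_{g^{-1}}$. Your partner-uniqueness argument is precisely the justification the paper leaves implicit when it asserts that $\widetilde U_g$ is the graph of a bijection.
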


\begin{proof}
Put $S:=M_n(\rho,R)$. For all $g,h\in G$, we have
\begin{equation}\label{eq:component-products}
S_gS_h = \bigoplus_{(i,j)\in\rho_g\rho_h}Re_{i,j}.
\end{equation}
Indeed, the inclusion from left to right follows from 
ordinary matrix multiplication. Conversely, if
$(i,k)\in\rho_g\rho_h$,
then there is some $j\in\overline{n}$ such that
$(i,j)\in\rho_g$ and $(j,k)\in\rho_h$.
Hence, for every $r\in R$,
$re_{i,k}=(re_{i,j})e_{j,k}\in S_gS_h$.
Since $R\neq\{0\}$, equality of two subgroups of the form
$\bigoplus_{(i,j)\in X}Re_{i,j}$
is equivalent to equality of their indexing sets. It follows
immediately that the grading on $\rho$ is trivial 
if and only if the
grading on $S$ is trivial. Similarly, applying
\eqref{eq:component-products} twice shows that
$S_gS_{g^{-1}}S_g =
\bigoplus_{(i,j)\in
\rho_g\rho_{g^{-1}}\rho_g}Re_{i,j}$.
Consequently, the grading on $\rho$ is symmetric 
if and only if the
grading on $S$ is symmetric. Equation
\eqref{eq:component-products} also shows that the grading on $\rho$ is strong if and only if the grading on $S$ is strong.

Next we consider epsilon-strongness. 
Suppose first that the grading on $\rho$ is epsilon-strong.
By Remark~\ref{rem:epsilon-set},
$E_g=\rho_g\rho_{g^{-1}}\cap\Delta$.
Put
$\epsilon_g:=\sum_{(i,i)\in E_g}e_{i,i}$. 
By \eqref{eq:component-products},
$\epsilon_g\in S_gS_{g^{-1}}$. Moreover,
$\epsilon_gs=s=s\epsilon_{g^{-1}}$
for every $s\in S_g$. Thus the grading on $S$ 
is epsilon-strong. 

Conversely, suppose that the grading on $S$ is epsilon-strong. For
every $g\in G$, choose $\epsilon_g\in S_gS_{g^{-1}}$
such that $\epsilon_gs=s=s\epsilon_{g^{-1}}$
for every $s\in S_g$. Write
$\epsilon_g = \sum_{(k,l)\in\rho_g\rho_{g^{-1}}}
r_{k,l}^{(g)}e_{k,l}$,
where all but finitely many coefficients are zero, and define
$E_g := \bigl\{ (i,i)\in\Delta \bigm| r_{i,i}^{(g)}=1
\bigr\}$.
Since $\epsilon_g\in S_gS_{g^{-1}}$, we have
$E_g\subseteq \bigl(\rho_g\rho_{g^{-1}}\bigr)\cap\Delta$.
Let $(i,j)\in\rho_g$. From $\epsilon_ge_{i,j}=e_{i,j}$
it follows that $r_{i,i}^{(g)}=1$,
and hence $(i,i)\in E_g$. Similarly, from
$e_{i,j}\epsilon_{g^{-1}}=e_{i,j}$
it follows that $(j,j)\in E_{g^{-1}}$. Therefore,
$E_g(i,j)=(i,j)=(i,j)E_{g^{-1}}$.
Thus the grading on $\rho$ is epsilon-strong.

Now suppose that the grading on $\rho$ is an epsilon-crossed
product. For every $g\in G$, choose subsets
$U_g\subseteq\rho_g$ and 
$V_{g^{-1}}\subseteq\rho_{g^{-1}}$ such that
$U_gV_{g^{-1}}=E_g$ and $V_{g^{-1}}U_g=E_{g^{-1}}$.
Put
\[
\widetilde U_g := \{(i,j)\in U_g\mid(j,i)\in V_{g^{-1}}\}
\quad \mbox{and} \quad
\widetilde V_{g^{-1}} :=
\{(j,i)\mid(i,j)\in\widetilde U_g\}.
\]
The two displayed relation equalities imply that
$\widetilde U_g$ is the graph of a bijection between the diagonal
indices occurring in $E_{g^{-1}}$ and those occurring in $E_g$.
Consequently, if
\[
s_g
:=
\sum_{(i,j)\in\widetilde U_g}e_{i,j}
\qquad\text{and}\qquad
t_{g^{-1}}
:=
\sum_{(j,i)\in\widetilde V_{g^{-1}}}e_{j,i},
\]
then $s_gt_{g^{-1}} = \sum_{(i,i)\in E_g}e_{i,i}= \epsilon_g$
and $t_{g^{-1}}s_g = \sum_{(j,j)\in E_{g^{-1}}}e_{j,j}
= \epsilon_{g^{-1}}$.
Hence the induced grading on $S$ is an epsilon-crossed product.

Finally, suppose that the grading on $\rho$ is a crossed product.
The same argument applies with $E_g=E_{g^{-1}}=\Delta$.
It yields elements $u_g\in S_g$ and 
$v_{g^{-1}}\in S_{g^{-1}}$ such that
$u_gv_{g^{-1}}=v_{g^{-1}}u_g=1_\Delta$.
Thus $u_g$ is invertible in $S$, with inverse 
$v_{g^{-1}}$, and the induced grading is a crossed product.
\end{proof}

\begin{exmp}\label{ex:crossed-converses}
The converses of Theorem~\ref{relring}(v) and (vi) do 
not hold for general associative unital coefficient rings.
Indeed, let $K$ be a field, let $V$ be a vector space 
over $K$ with a countably infinite basis
$v_1,v_2,\ldots$, and put $R:=\End_K(V)$.
Define $a_1,a_2,b_1,b_2\in R$ by
\[
b_1(v_m):=v_{2m-1},
\qquad
b_2(v_m):=v_{2m}
\]
for every $m\geq 1$, and
\[
a_1(v_{2m-1}):=v_m,
\qquad
a_1(v_{2m}):=0,
\]
\[
a_2(v_{2m}):=v_m,
\qquad
a_2(v_{2m-1}):=0.
\]
Then $a_ib_j=\delta_{i,j}1_R$ for all $i,j\in\{1,2\}$, and
$b_1 a_1 + b_2 a_2 = 1_R$. Let
$G := \langle g\mid g^2=e\rangle$ and let 
$\rho:=\overline{3}^{\,2}$. Define a $G$-grading on $\rho$ by
\[
\rho_e := \{(1,1),(1,2),(2,1),(2,2),(3,3)\}
\quad \mbox{and} \quad
\rho_g := \{(1,3),(2,3),(3,1),(3,2)\}.
\]
A direct computation gives
$\rho_e\rho_e=\rho_e$, $\rho_e\rho_g=\rho_g=\rho_g\rho_e$,
$\rho_g\rho_g=\rho_e$.
Thus the grading on $\rho$ is strong.
We claim that it is not an epsilon-crossed product. 
By Remark~\ref{rem:epsilon-set},
$E_g=\rho_g\rho_{g^{-1}}\cap\Delta
=\rho_e\cap\Delta
=\Delta$.
Suppose that there are subsets
$U_g, V_g\subseteq\rho_g$ such that $U_gV_g=\Delta$.
Since $(1,1)\in U_gV_g$, we must have
$(1,3)\in U_g$ and $(3,1)\in V_g$. 
Similarly, since $(2,2)\in U_gV_g$, we must have
$(2,3)\in U_g$ and $(3,2)\in V_g$. It follows that
$(1,2)=(1,3)(3,2)\in U_gV_g$,
contradicting $U_gV_g=\Delta$. Hence the grading on 
$\rho$ is not an epsilon-crossed product and, 
in particular, is not a crossed product.

Now equip $S:=M_3(R)$ with the very good grading 
induced by the above grading on $\rho$.
Consider the homogeneous element
$u_g := a_1e_{1,3} + a_2e_{2,3} + b_1e_{3,1}
+ b_2e_{3,2} \in S_g$.
Using the relations satisfied by $a_1,a_2,b_1,b_2$, we obtain
\[
u_g^2 = \sum_{i,j=1}^2 a_ib_j e_{i,j} +
(b_1a_1+b_2a_2)e_{3,3} = e_{1,1}+e_{2,2}+e_{3,3} = 1_\Delta.
\]
Thus $u_g$ is invertible, with $u_g^{-1}=u_g$. 
Since $1_\Delta\in S_e$ is invertible, the induced 
grading on $S$ is a crossed product and 
therefore also an epsilon-crossed product.
\end{exmp}

\begin{cor}\label{cor:epsilon-criterion}
Suppose that $M_n(\rho,R)$ is equipped with a very good
$G$-grading. Then the following assertions are equivalent:
\begin{enumerate}[{\rm (i)}]

\item $M_n(\rho,R)$ is epsilon-strongly $G$-graded;

\item for every $g\in G$ and every $(i,j)\in\rho_g$,
we have $(i,i)\in\rho_g\rho_{g^{-1}}$ and 
$(j,j)\in\rho_{g^{-1}}\rho_g$.

\end{enumerate}
In that case, for each $g \in G$, we have 
$\epsilon_g = \sum_{i \in \overline{n},
(i,i)\in\rho_g\rho_{g^{-1}}} e_{i,i}.$
\end{cor}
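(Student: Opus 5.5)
By Theorem~\ref{relring}(iii), (i) is equivalent to the $G$-grading on $\rho$ being epsilon-strong. So the plan is to show that epsilon-strongness of $\rho$ is equivalent to the combinatorial condition (ii). Afterwards we read off the epsilon elements from the proof of Theorem~\ref{relring}.

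For the direction from epsilon-strong to (ii), suppose $\rho$ is epsilon-strong and take $(i,j)\in\rho_g$. Then $E_g(i,j)=(i,j)$ with $E_g\subseteq\Delta$, which forces $(i,i)\in E_g\subseteq\rho_g\rho_{g^{-1}}$. Likewise $(i,j)E_{g^{-1}}=(i,j)$ forces $(j,j)\in E_{g^{-1}}\subseteq\rho_{g^{-1}}\rho_g$.

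For the converse, assume (ii) and put $E_g:=\rho_g\rho_{g^{-1}}\cap\Delta$ for every $g\in G$. For $(i,j)\in\rho_g$, condition (ii) gives $(i,i)\in E_g$. Since $E_g$ consists only of diagonal pairs, $E_g(i,j)=\{(i,j)\}$. Applying the definition with $g^{-1}$ in place of $g$, we get $E_{g^{-1}}=\rho_{g^{-1}}\rho_g\cap\Delta$. Condition (ii) gives $(j,j)\in E_{g^{-1}}$, so $(i,j)E_{g^{-1}}=(i,j)$. Hence $\rho$ is epsilon-strong, and (i) follows from Theorem~\ref{relring}(iii).

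For the formula, the proof of Theorem~\ref{relring}(iii) constructs $\epsilon_g=\sum_{(i,i)\in E_g}e_{i,i}$, and Remark~\ref{rem:epsilon-set} gives $E_g=\rho_g\rho_{g^{-1}}\cap\Delta$. This produces the stated expression.

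The only delicate point is the sense of ``we have $\epsilon_g=\ldots$'', since the epsilon elements of a ring grading need not be unique in general. One reading is that this particular choice works, which is immediate from the above. For the stronger reading, let $\epsilon_g$ be any valid choice, written as $\epsilon_g=\sum_{(k,l)} r^{(g)}_{k,l}e_{k,l}$ over $(k,l)\in\rho_g\rho_{g^{-1}}$.

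First, note that the entries of $\epsilon_g$ lie in $\rho_g\rho_{g^{-1}}$, and every such pair $(k,l)$ has some $(l,m)\in\rho_g$ with $(k,m)\in\rho_g$. Since $\epsilon_g(1\cdot e_{k,m})=e_{k,m}$, the $(\cdot,m)$ column of the product forces $r^{(g)}_{k,l}=\delta_{k,l}$ whenever the column is determined. Consequently all diagonal entries at $(k,k)\in\rho_g\rho_{g^{-1}}$ equal $1$ and the off-diagonal entries vanish, so $\epsilon_g$ is the stated sum.

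The main obstacle is making this column comparison rigorous. I would verify it by computing $\epsilon_g e_{l,m}$ entrywise, using $(l,m)\in\rho_g$ for each $(k,l)\in\rho_g\rho_{g^{-1}}$; this yields $r^{(g)}_{k,l}=\delta_{k,l}$ directly.
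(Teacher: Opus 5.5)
The equivalence (i)$\Leftrightarrow$(ii), and your check that $\sum_{(i,i)\in\rho_g\rho_{g^{-1}}}e_{i,i}$ is an admissible epsilon element, follow the paper's proof. You reduce to $\rho$ via Theorem~\ref{relring}(iii) and read off $(i,i)\in E_g$ and $(j,j)\in E_{g^{-1}}$ from $E_g(i,j)=(i,j)=(i,j)E_{g^{-1}}$. For the converse you take $E_g:=\rho_g\rho_{g^{-1}}\cap\Delta$, exactly as the paper does.

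You diverge only on uniqueness, and there your premise is mistaken: epsilon elements are \emph{always} unique in an epsilon-strongly graded ring. Put $S:=M_n(\rho,R)$. The defining condition at $g^{-1}$ gives $t\epsilon_g=t$ for all $t\in S_{g^{-1}}$, so $\epsilon_g$ is a two-sided identity of $S_gS_{g^{-1}}$. Now suppose $\epsilon'_g\in S_gS_{g^{-1}}$ also satisfies $\epsilon'_gs=s$ for all $s\in S_g$. Then $\epsilon'_g=\epsilon'_g\epsilon_g=\epsilon_g$. The first equality holds because $\epsilon_g$ is a right identity on $S_gS_{g^{-1}}\ni\epsilon'_g$. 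The second holds because $\epsilon'_g$ is a left identity on $S_g$, hence on $S_gS_{g^{-1}}\ni\epsilon_g$. This is what the paper tacitly uses when it calls its element ``the sought unique epsilon element'', and it makes your entrywise computation unnecessary.

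That computation is still a valid alternative, but two points need repair.
\begin{enumerate}[(1)]
\item You assert without proof that some $(l,m)\in\rho_g$ exists for each $(k,l)\in\rho_g\rho_{g^{-1}}$. It does hold: some $(p,l)\in\rho_{g^{-1}}$ exists, and (ii) applied to $g^{-1}$ gives $(l,l)\in\rho_g\rho_{g^{-1}}$.
\item Drop the sentence based on $\epsilon_ge_{k,m}=e_{k,m}$, since that product only determines column $k$ of $\epsilon_g$. What yields $r^{(g)}_{k,l}=\delta_{k,l}$ is $\epsilon_ge_{l,m}=\sum_{k'}r^{(g)}_{k',l}e_{k',m}=e_{l,m}$, as in your final paragraph. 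The side condition $(k,m)\in\rho_g$ plays no role.
\end{enumerate}
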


\begin{proof}
By Theorem~\ref{relring}, the grading on $M_n(\rho,R)$ is
epsilon-strong if and only if the grading on $\rho$ is
epsilon-strong. 

Suppose first that the grading is epsilon-strong. By
Remark~\ref{rem:epsilon-set},
$E_g=\rho_g\rho_{g^{-1}}\cap\Delta$.
If $(i,j)\in\rho_g$, then
$E_g(i,j)=(i,j)=(i,j)E_{g^{-1}}$, and 
hence
$(i,i)\in\rho_g\rho_{g^{-1}}$
and $(j,j)\in\rho_{g^{-1}}\rho_g$.
Thus (ii) holds.

Conversely, suppose that (ii) holds. For every $g\in G$, 
put $E_g := \bigl(\rho_g\rho_{g^{-1}}\bigr)\cap\Delta$.
If $(i,j)\in\rho_g$, then (ii) gives
$(i,i)\in E_g$ and $(j,j)\in E_{g^{-1}}$.
Hence $E_g(i,j)=(i,j)=(i,j)E_{g^{-1}}$,
so the grading on $\rho$ is epsilon-strong. 
Theorem~\ref{relring} therefore shows that the 
grading on $M_n(\rho,R)$ is epsilon-strong.

Finally, put $\epsilon_g :=
\sum_{i\in\overline{n}, (i,i)\in\rho_g\rho_{g^{-1}}} e_{i,i}$.
Since $M_n(\rho,R)_gM_n(\rho,R)_{g^{-1}} =
\bigoplus_{(i,j)\in\rho_g\rho_{g^{-1}}}Re_{i,j}$,
we have $\epsilon_g\in M_n(\rho,R)_gM_n(\rho,R)_{g^{-1}}$.
Assertion (ii) implies that
$\epsilon_gs=s=s\epsilon_{g^{-1}}$
for every $s\in M_n(\rho,R)_g$. 
Thus $\epsilon_g$ is the sought unique epsilon element satisfying \eqref{eqep}. 
\end{proof}

\begin{cor}\label{cor:equivalence-epsilon-ring}
Suppose that $\rho$ is an equivalence relation. Then every very good
$G$-grading on  $M_n(\rho,R)$ is epsilon-strong.
\end{cor}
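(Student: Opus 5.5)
The argument is a direct chaining of two earlier results, with the grading on $\rho$ as the intermediary. Let a very good $G$-grading on $M_n(\rho,R)$ be given. By Definition~\ref{def:VeryGood}, it is induced by some $G$-grading $(\rho_g)_{g\in G}$ on $\rho$ in the sense of Proposition~\ref{prop:rhodefinesS}. By Remark~\ref{gradedrho}(a), this grading on $\rho$ is recovered from the ring grading: $(i,j)\in\rho_g$ exactly when $Re_{i,j}\subseteq M_n(\rho,R)_g$. So we may work with $(\rho_g)_{g\in G}$ directly.

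Since $\rho$ is an equivalence relation, the implication (i)$\Rightarrow$(ii) of Proposition~\ref{prop:i-ii-iii} applies to $(\rho_g)_{g\in G}$. It shows that this $G$-grading on $\rho$ is epsilon-strong. The underlying reason is Lemma~\ref{lem:TransposeInverse}: for $(i,j)\in\rho_g$ we get $(j,i)\in\rho_{g^{-1}}$. Hence $(i,i)\in\rho_g\rho_{g^{-1}}$ and $(j,j)\in\rho_{g^{-1}}\rho_g$.

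Next, Theorem~\ref{relring}(iii) says that epsilon-strongness of the grading on $\rho$ is equivalent to epsilon-strongness of the induced grading on $M_n(\rho,R)$. Therefore the given very good grading is epsilon-strong. Alternatively, the two memberships noted in the previous paragraph are exactly condition (ii) of Corollary~\ref{cor:epsilon-criterion}, which gives the same conclusion. That corollary also identifies the epsilon elements as $\epsilon_g=\sum_{(i,i)\in\rho_g\rho_{g^{-1}}}e_{i,i}$.

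There is no substantive obstacle here. The only point requiring care is to note explicitly that "very good" means the grading comes from a grading on $\rho$, so that the relation-level results apply to it.
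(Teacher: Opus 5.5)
Your proof is correct and essentially matches the paper's: the paper uses Lemma~\ref{lem:TransposeInverse} to get $(i,i)\in\rho_g\rho_{g^{-1}}$ and $(j,j)\in\rho_{g^{-1}}\rho_g$ and then applies Corollary~\ref{cor:epsilon-criterion}, which is exactly your ``alternative'' argument. Your primary route through Proposition~\ref{prop:i-ii-iii} and Theorem~\ref{relring}(iii) is the same reasoning, since Corollary~\ref{cor:epsilon-criterion} itself rests on Theorem~\ref{relring}.
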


\begin{proof}
Let $(\rho_g)_{g\in G}$ be the $G$-grading on $\rho$ inducing the
very good grading on $M_n(\rho,R)$. Let $g\in G$ and
$(i,j)\in\rho_g$. Since $\rho$ is an equivalence relation, we have
$(j,i)\in\rho$. Lemma~\ref{lem:TransposeInverse} therefore 
gives $(j,i)\in\rho_{g^{-1}}$. Consequently,
$(i,i)=(i,j)(j,i)\in\rho_g\rho_{g^{-1}}$
and $(j,j)=(j,i)(i,j)\in\rho_{g^{-1}}\rho_g$.
The result now follows from
Corollary~\ref{cor:epsilon-criterion}.
\end{proof}

The preceding criterion also applies to preorders that are not
equivalence relations.

\begin{exmp}\label{enoeq}
Consider the preorder
\[
\rho
:=
\{
(1,1),(2,2),(3,3),(4,4),(1,2),(3,4),(4,3)
\}
\]
on $\overline{4}$. The relation $\rho$ is not an 
equivalence relation, since
$(1,2)\in\rho$ but $(2,1)\notin\rho$.
Let $\Z_3:=\{0,1,2\}$, and define a $\Z_3$-grading on 
$\rho$ by
\[
\rho_0
:=
\{(1,1),(2,2),(3,3),(4,4),(1,2)\},
\quad
\rho_1:=\{(3,4)\},
\quad
\rho_2:=\{(4,3)\}.
\]
By Proposition~\ref{prop:rhodefinesS}, this grading 
induces a very good $\Z_3$-grading on
$S:=M_4(\rho,R)$. We have $\rho_0\rho_0=\rho_0$,
$\rho_1\rho_2=\{(3,3)\}$ and 
$\rho_2\rho_1=\{(4,4)\}$.
Consequently, the condition in
Corollary~\ref{cor:epsilon-criterion}(ii) holds for 
every homogeneous
relation $\rho_g$. Hence the induced grading on $S$ is
epsilon-strong. Its epsilon elements are
$\epsilon_0=1_\Delta$, $\epsilon_1=e_{3,3}$ and
$\epsilon_2=e_{4,4}$.
\end{exmp}

Next, we investigate strongly graded structural matrix rings further.

\begin{theorem}\label{thm:strong-cardinality}
Suppose that $M_n(\rho,R)$ is 
equipped with a very good strong $G$-grading. 
Then
$|G|\leq n$. Moreover, if $|G|=n$, then
$\rho=\overline{n}^{\,2}$, and hence $M_n(\rho,R)=M_n(R)$.
\end{theorem}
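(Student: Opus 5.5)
The plan is to reduce to a counting argument on the preorder. First, by Theorem~\ref{relring}(iv), strongness of the induced grading on $M_n(\rho,R)$ is equivalent to strongness of the grading $(\rho_g)_{g\in G}$ on $\rho$. So $\rho_g\rho_{g^{-1}}=\rho_e$ for every $g\in G$. Combined with Proposition~\ref{prop:deltasubsetrho}(i), this gives $\Delta\subseteq\rho_g\rho_{g^{-1}}$ for every $g\in G$.

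Next, fix a row index $i\in\overline{n}$. For each $g\in G$, the inclusion $(i,i)\in\rho_g\rho_{g^{-1}}$ yields some $j_g\in\overline{n}$ with $(i,j_g)\in\rho_g$ (and $(j_g,i)\in\rho_{g^{-1}}$). Choosing such a $j_g$ for each $g$ defines a map $\varphi_i\colon G\to\overline{n}$, $g\mapsto j_g$. I would show that $\varphi_i$ is injective. If $j_g=j_h$, then the pair $(i,j_g)$ lies in $\rho_g\cap\rho_h$, and since the components $\rho_g$ are pairwise disjoint, $g=h$. Injectivity gives $|G|\leq n$; in particular, $G$ is finite.

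For the equality case, suppose $|G|=n$. Then each injective map $\varphi_i\colon G\to\overline{n}$ is a bijection, since both sets are finite of the same size. So for every $i\in\overline{n}$ and every $j\in\overline{n}$ there is $g\in G$ with $(i,j)=(i,\varphi_i(g))\in\rho_g\subseteq\rho$. Hence $\rho=\overline{n}^{\,2}$, and therefore $M_n(\rho,R)=M_n(R)$.

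I do not expect a serious obstacle. The only point needing care is that strongness must be transferred to $\rho$ (via Theorem~\ref{relring}(iv)) before using that $\Delta\subseteq\rho_e=\rho_g\rho_{g^{-1}}$ forces every row to meet every homogeneous component. It is also worth noting that finiteness of $G$ is not assumed but follows from the injection.
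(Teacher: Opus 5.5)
Your proof is correct and uses essentially the same pigeonhole argument as the paper, which fixes row $1$ and shows that the sets $J_g=\{j\mid (1,j)\in\rho_g,\ (j,1)\in\rho_{g^{-1}}\}$ are nonempty and pairwise disjoint. The only difference is in the equality case: the paper uses row $1$ together with $(j,1)\in\rho$ and transitivity, $(i,j)=(i,1)(1,j)$, whereas you repeat the bijection argument in every row $i$, so you do not need transitivity there.
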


\begin{proof}
By Theorem~\ref{relring}, the $G$-grading on 
$\rho$ is strong. For each $g\in G$, put
\[
J_g := \bigl\{ j\in\overline{n} \bigm|
(1,j)\in\rho_g \text{ and } (j,1)\in\rho_{g^{-1}}
\bigr\}.
\]
Since the grading is strong, $\rho_g\rho_{g^{-1}}=\rho_e$.
Moreover, $(1,1)\in\Delta\subseteq\rho_e$. Hence
$(1,1)\in\rho_g\rho_{g^{-1}}$, 
so there is some $j\in\overline{n}$ such that
$(1,j)\in\rho_g$ and $(j,1)\in\rho_{g^{-1}}$.
Thus $J_g\neq\emptyset$ for every $g\in G$.
We claim that the sets $J_g$, $g\in G$, are pairwise 
disjoint. Indeed, if $j \in J_g\cap J_h$, then
$(1,j)\in\rho_g\cap\rho_h$.
Since the homogeneous components of $\rho$ are 
pairwise disjoint, it follows that $g=h$. 
Thus $(J_g)_{g\in G}$ is a family of pairwise
disjoint nonempty subsets of the $n$-element set 
$\overline{n}$. Consequently, $|G|\leq n$.
Suppose now that $|G|=n$. Since the sets $J_g$, 
$g\in G$, are pairwise disjoint and nonempty, 
their union is $\overline{n}$. Hence,
for every $j\in\overline{n}$, there is some $g\in G$ 
such that $j \in J_g$. In particular,
$(1,j)\in\rho$ and $(j,1)\in\rho$.
Therefore, for arbitrary $i,j\in\overline{n}$, 
transitivity of $\rho$ gives
$(i,j)=(i,1)(1,j)\in\rho$. Thus
$\rho=\overline{n}^{\,2}$, and consequently
$M_n(\rho,R)=M_n(R)$.
\end{proof}

\section{Very good gradings and partial actions}\label{partial}

A very good $G$-grading on $M_n(\rho,R)$ is determined by a $G$-grading $(\rho_g)_{g\in G}$ on the preorder $\rho$. In this section, we consider the case where $\rho$ is an equivalence relation and the neutral component of the induced ring grading consists precisely of the diagonal matrices. We show that, under these assumptions, each relation $\rho_g$ is the graph of a partial bijection and that these partial bijections form a free partial action of $G$ on $\n$ whose orbit equivalence relation is $\rho$. Conversely, every such partial action induces a very good $G$-grading on $M_n(\rho,R)$. This yields a bijective correspondence between the two types of structures.

\begin{defn}\label{def:partial-action}
A \emph{partial action} of $G$ on a set $X$ is a family
$\alpha=(X_g,\alpha_g)_{g\in G}$,
where $X_g\subseteq X$ and
$\alpha_g:X_{g^{-1}}\to X_g$ is a bijection for every $g\in G$,
such that the following assertions hold:
\begin{itemize}
\item[(P1)] $X_e=X$ and $\alpha_e=\id_X$;
\item[(P2)] if $g,h\in G$, $x\in X_{h^{-1}}$, and
$\alpha_h(x)\in X_{g^{-1}}$, then $x\in X_{(gh)^{-1}}$ and
$\alpha_{gh}(x)=\alpha_g(\alpha_h(x))$.
\end{itemize}
The partial action is called \emph{global} if $X_g=X$ for every
$g\in G$. It is called \emph{free} if, whenever
$x\in X_{g^{-1}}\cap X_{h^{-1}}$ and
$\alpha_g(x)=\alpha_h(x)$, one has $g=h$.
The \emph{orbit equivalence relation} $\sim_\alpha$ 
induced by $\alpha$ is defined by
$i\sim_\alpha j \Leftrightarrow \alpha_g(j)=i$
for some $g\in G$ with $j\in X_{g^{-1}}$.
The partial action is called \emph{transitive} if
$i\sim_\alpha j$ for all $i,j\in X$.
\end{defn}

The orbit relation induced by any partial action is 
an equivalence relation. Thus, if a $G$-grading on a 
preorder $\rho$ is to be described by a partial action 
whose orbit relation is $\rho$, it is necessary to 
assume that $\rho$ is an equivalence relation. For the
remainder of this section, we therefore assume that 
$\rho$ is an equivalence relation on $\n$. 
We put $S:=M_n(\rho,R)$ and $D:=\bigoplus_{i=1}^n Re_{i,i}$.

\begin{defn}
We denote by $\mathcal{G}(\rho,G,R)$ the set of all very good
$G$-gradings $\mathcal{S}=(S_g)_{g\in G}$
on $S$ such that $S_e=D$. We also denote by
$\mathcal{A}(\rho,G)$ the set of all free partial actions 
of $G$ on
$\n$ whose orbit equivalence relation is $\rho$.
\end{defn}

\begin{defn}\label{def:partial-action-from-grading}
Let $\mathcal{S}=(S_g)_{g\in G}
\in\mathcal{G}(\rho,G,R)$. Take $g \in G$. Put
\[
\rho_g^{\mathcal{S}} :=
\{(i,j)\in\rho\mid e_{i,j}\in S_g\}, \quad \mbox{and} \quad
X_g^{\mathcal{S}} :=
\{i \in \n \mid (i,j)\in\rho_g^{\mathcal{S}} \
\mbox{for some} \ j \in \n \}.
\]
Consider the rule
$\alpha_g^{\mathcal{S}}(j)=i \Leftrightarrow
(i,j)\in\rho_g^{\mathcal{S}}$, where
$j\in X_{g^{-1}}^{\mathcal{S}}$ and 
$i\in X_g^{\mathcal{S}}$.
We then write $\alpha^{\mathcal{S}} := 
\bigl( X_g^{\mathcal{S}}, \alpha_g^{\mathcal{S}}
\bigr)_{g\in G}$ and define
$\Phi(\mathcal{S}):=\alpha^{\mathcal{S}}$.
\end{defn}

\begin{lemma}\label{lem:grading-to-partial-action}
The map $\Phi:
\mathcal{G}(\rho,G,R) \to \mathcal{A}(\rho,G)$
is well defined.
\end{lemma}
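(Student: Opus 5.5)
We have to show that for $\mathcal{S}\in\mathcal{G}(\rho,G,R)$ the family $\alpha^{\mathcal{S}}$ is a free partial action of $G$ on $\n$ whose orbit relation is $\rho$. Since $\mathcal{S}$ is very good, it is induced by a $G$-grading $(\rho_g)_{g\in G}$ on $\rho$. By Remark~\ref{gradedrho}(a) we have $\rho_g^{\mathcal{S}}=\rho_g$, so we may work with $\rho_g$ throughout. The hypothesis $S_e=D$ translates into $\rho_e=\Delta$. Since $\rho$ is an equivalence relation, Proposition~\ref{prop:i-ii-iii}(iv) gives $(\rho_g)^{-1}=\rho_{g^{-1}}$. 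Hence $X_{g^{-1}}^{\mathcal{S}}$, defined as the set of first coordinates of $\rho_{g^{-1}}$, is exactly the set of second coordinates of $\rho_g$.

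The first and key step is to show that each $\rho_g$ is the graph of a partial bijection. Suppose $(i,j),(i,k)\in\rho_g$. Since $(k,i)\in\rho_{g^{-1}}$, we get $(j,k)=(j,i)(i,k)\in\rho_{g^{-1}}\rho_g\subseteq\rho_e=\Delta$, so $j=k$. Symmetrically, $(i,j),(k,j)\in\rho_g$ gives $(i,k)=(i,j)(j,k)\in\rho_g\rho_{g^{-1}}\subseteq\Delta$, so $i=k$. Therefore the rule defining $\alpha_g^{\mathcal{S}}$ is a well-defined bijection $X_{g^{-1}}^{\mathcal{S}}\to X_g^{\mathcal{S}}$. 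The step I expect to need the most care is this domain and codomain bookkeeping via $(\rho_g)^{-1}=\rho_{g^{-1}}$; everything else is mechanical.

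Next come the axioms. For (P1), $\rho_e=\Delta$ gives $X_e^{\mathcal{S}}=\n$ and $\alpha_e^{\mathcal{S}}=\id$. For (P2), take $x\in X_{h^{-1}}^{\mathcal{S}}$ with $y:=\alpha_h^{\mathcal{S}}(x)\in X_{g^{-1}}^{\mathcal{S}}$ and put $z:=\alpha_g^{\mathcal{S}}(y)$. Then $(y,x)\in\rho_h$ and $(z,y)\in\rho_g$, so $(z,x)\in\rho_g\rho_h\subseteq\rho_{gh}$. This shows $x\in X_{(gh)^{-1}}^{\mathcal{S}}$ and $\alpha_{gh}^{\mathcal{S}}(x)=z$.

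Freeness follows from disjointness of the components. If $\alpha_g^{\mathcal{S}}(x)=\alpha_h^{\mathcal{S}}(x)=i$, then $(i,x)\in\rho_g\cap\rho_h$, hence $g=h$. Finally, for the orbit relation, $i\sim_{\alpha}j$ holds if and only if $(i,j)\in\rho_g$ for some $g\in G$, which holds if and only if $(i,j)\in\bigcup_{g\in G}\rho_g=\rho$. Hence $\Phi(\mathcal{S})\in\mathcal{A}(\rho,G)$.
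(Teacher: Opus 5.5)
Your proposal is correct and follows the paper's proof essentially step for step. The paper likewise gets $\rho_e=\Delta$ and $(\rho_g)^{-1}=\rho_{g^{-1}}$ from Proposition~\ref{prop:i-ii-iii}, proves that $\alpha_g(j)$ is unique via $\rho_g\rho_{g^{-1}}\subseteq\Delta$, and checks (P1), (P2), freeness and the orbit relation exactly as you do; the one difference is that it gets injectivity by noting that $\alpha_{g^{-1}}$ is the inverse of $\alpha_g$, whereas you argue it directly. There is one harmless slip in your first uniqueness argument: the factor you actually use is $(j,i)\in\rho_{g^{-1}}$, not $(k,i)$.
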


\begin{proof}
Let $\mathcal{S}=(S_g)_{g\in G}
\in\mathcal{G}(\rho,G,R)$.
Since $\mathcal{S}$ is a very good grading, the family
$\bigl(\rho_g^{\mathcal{S}}\bigr)_{g\in G}$
is a $G$-grading on $\rho$. Moreover, the equality $S_e=D$ implies that $\rho_e^{\mathcal{S}}=\Delta$.
Since $\rho$ is an equivalence relation,
Proposition~\ref{prop:i-ii-iii} gives
$\bigl(\rho_g^{\mathcal{S}}\bigr)^{-1} =
\rho_{g^{-1}}^{\mathcal{S}}$
for every $g\in G$.
We first show that $\alpha_g^{\mathcal{S}}:
X_{g^{-1}}^{\mathcal{S}} \to X_g^{\mathcal{S}}$ is a well-defined bijection.
Let $j\in X_{g^{-1}}^{\mathcal{S}}$. By the definition of
$X_{g^{-1}}^{\mathcal{S}}$, there is some $i\in\n$ with
$(j,i)\in\rho_{g^{-1}}^{\mathcal{S}}$.
It follows from $\bigl(\rho_g^{\mathcal{S}}\bigr)^{-1} =
\rho_{g^{-1}}^{\mathcal{S}}$ that
$(i,j)\in\rho_g^{\mathcal{S}}$.
In particular, $i\in X_g^{\mathcal{S}}$.
This element $i$ is unique. Indeed, suppose that
$(i,j),(i',j)\in\rho_g^{\mathcal{S}}$.
By $\bigl(\rho_g^{\mathcal{S}}\bigr)^{-1} =
\rho_{g^{-1}}^{\mathcal{S}}$, we have
$(j,i')\in\rho_{g^{-1}}^{\mathcal{S}}$.
Therefore, $(i,i') = (i,j)(j,i') \in
\rho_g^{\mathcal{S}}\rho_{g^{-1}}^{\mathcal{S}}
\subseteq \rho_e^{\mathcal{S}} = \Delta$.
Consequently, $i=i'$, and hence
$\alpha_g^{\mathcal{S}}$ is well defined.
The relation $\bigl(\rho_g^{\mathcal{S}}\bigr)^{-1} =
\rho_{g^{-1}}^{\mathcal{S}}$ also shows that
$\alpha_{g^{-1}}^{\mathcal{S}}$ is the inverse of
$\alpha_g^{\mathcal{S}}$. Thus
$\alpha_g^{\mathcal{S}}:
X_{g^{-1}}^{\mathcal{S}} \to X_g^{\mathcal{S}}$
is a bijection.

Since $\rho_e^{\mathcal{S}}=\Delta$, we have
$X_e^{\mathcal{S}}=\n$ and $\alpha_e^{\mathcal{S}}=\id_{\n}$.

Now let $g,h\in G$, and suppose that
$j \in X_{h^{-1}}^{\mathcal{S}}$ and 
$\alpha_h^{\mathcal{S}}(j) \in X_{g^{-1}}^{\mathcal{S}}$.
Put $l:=\alpha_h^{\mathcal{S}}(j)$ and 
$i:=\alpha_g^{\mathcal{S}}(l)$. Then
$(i,l)\in\rho_g^{\mathcal{S}}$ and 
$(l,j)\in\rho_h^{\mathcal{S}}$.
It follows that $(i,j) \in \rho_g^{\mathcal{S}}
\rho_h^{\mathcal{S}} \subseteq \rho_{gh}^{\mathcal{S}}$.
Consequently, $\alpha_{gh}^{\mathcal{S}}(j) = i =
\alpha_g^{\mathcal{S}} \bigl(\alpha_h^{\mathcal{S}}(j)\bigr)$.
Thus $\alpha^{\mathcal{S}}$ is a partial action of $G$ on $\n$.

We now show that this partial action is free. Let
$j\in X_{g^{-1}}^{\mathcal{S}} \cap X_{h^{-1}}^{\mathcal{S}}$
and $\alpha_g^{\mathcal{S}}(j) = \alpha_h^{\mathcal{S}}(j)
= i$. Then $(i,j)\in \rho_g^{\mathcal{S}} \cap
\rho_h^{\mathcal{S}}$.
Since the sets $\rho_g^{\mathcal{S}}$, $g\in G$, are pairwise
disjoint, it follows that $g=h$.

Finally, for $i,j\in\n$, we have
$(i,j) \in \rho \Leftrightarrow
(i,j)\in\rho_g^{\mathcal{S}}$ for some $g\in G$
$\Leftrightarrow \alpha_g^{\mathcal{S}}(j)=i$
for some $g\in G$.
Consequently, the orbit equivalence relation of
$\alpha^{\mathcal{S}}$ is $\rho$. Thus
$\alpha^{\mathcal{S}}
\in
\mathcal{A}(\rho,G)$,
and hence $\Phi$ is well defined.
\end{proof}

\begin{defn}\label{def:grading-from-partial-action}
Let $\alpha=(X_g,\alpha_g)_{g\in G} \in\mathcal{A}(\rho,G)$.
For every $g\in G$, put 
\[
\rho_g^\alpha :=
\{(\alpha_g(j),j)\mid j\in X_{g^{-1}}\} 
\quad \mbox{and} \quad
S_g^\alpha := \bigoplus_{j\in X_{g^{-1}}} Re_{\alpha_g(j),j}.
\]
We write $\mathcal{S}^\alpha := (S_g^\alpha)_{g\in G}$ 
and define $\Psi(\alpha):=\mathcal{S}^{\alpha}$.
\end{defn}

\begin{lemma}\label{lem:partial-action-to-grading}
The map
$\Psi: \mathcal{A}(\rho,G) \to \mathcal{G}(\rho,G,R)$
is well defined.
\end{lemma}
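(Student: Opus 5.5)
To prove that $\Psi$ is well defined, I will show that for $\alpha=(X_g,\alpha_g)_{g\in G}\in\mathcal{A}(\rho,G)$ the family $(\rho_g^\alpha)_{g\in G}$ is a $G$-grading on $\rho$ in the sense of Definition~\ref{gradedr} with $\rho_e^\alpha=\Delta$. Once this is done, Proposition~\ref{prop:rhodefinesS} shows that $\mathcal{S}^\alpha$ is the very good $G$-grading induced by $(\rho_g^\alpha)_{g\in G}$. Indeed, $S_g^\alpha=\bigoplus_{(i,j)\in\rho_g^\alpha}Re_{i,j}$, since $j\mapsto(\alpha_g(j),j)$ is a bijection from $X_{g^{-1}}$ onto $\rho_g^\alpha$. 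Moreover, $S_e^\alpha=\bigoplus_{j\in\n}Re_{j,j}=D$ by (P1), so $\mathcal{S}^\alpha\in\mathcal{G}(\rho,G,R)$.

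The verification of the grading axioms proceeds in four steps.

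\emph{Containment and covering.} Each $\rho_g^\alpha$ lies in $\rho$, because $(\alpha_g(j),j)$ means $\alpha_g(j)\sim_\alpha j$ and the orbit relation of $\alpha$ is $\rho$. Conversely, every $(i,j)\in\rho$ satisfies $i\sim_\alpha j$, so $i=\alpha_g(j)$ for some $g$ with $j\in X_{g^{-1}}$. Hence $\rho=\bigcup_{g\in G}\rho_g^\alpha$.

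\emph{Disjointness.} If $(i,j)\in\rho_g^\alpha\cap\rho_h^\alpha$, then $j\in X_{g^{-1}}\cap X_{h^{-1}}$ and $\alpha_g(j)=i=\alpha_h(j)$. Freeness then forces $g=h$.

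\emph{Neutral component.} By (P1) we have $X_e=\n$ and $\alpha_e=\id$, so $\rho_e^\alpha=\Delta$.

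\emph{Multiplicativity.} This is the one step that needs care. Take $(i,l)\in\rho_g^\alpha$ and $(l,j)\in\rho_h^\alpha$, so $l=\alpha_h(j)$ with $j\in X_{h^{-1}}$, and $i=\alpha_g(l)$ with $l\in X_{g^{-1}}$. Axiom (P2) then gives $j\in X_{(gh)^{-1}}$ and $\alpha_{gh}(j)=\alpha_g(\alpha_h(j))=i$. Hence $(i,j)\in\rho_{gh}^\alpha$, which proves $\rho_g^\alpha\rho_h^\alpha\subseteq\rho_{gh}^\alpha$.

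The main point to watch is that (P2) is applied with the domain conditions in the correct order, namely $j\in X_{h^{-1}}$ and $\alpha_h(j)\in X_{g^{-1}}$; both are supplied directly by the definition of $\rho_g^\alpha$. The remaining steps are routine bookkeeping.
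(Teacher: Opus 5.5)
Your proposal is correct and follows essentially the same route as the paper: you verify covering, disjointness via freeness, multiplicativity via (P2), and $\rho_e^\alpha=\Delta$ via (P1), and then invoke Proposition~\ref{prop:rhodefinesS}. Your explicit check that $\rho_g^\alpha\subseteq\rho$, and your identification $S_g^\alpha=\bigoplus_{(i,j)\in\rho_g^\alpha}Re_{i,j}$, are details the paper leaves implicit.
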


\begin{proof}
Let $\alpha=(X_g,\alpha_g)_{g\in G} \in \mathcal{A}(\rho,G)$.
We first show that $(\rho_g^\alpha)_{g\in G}$
is a $G$-grading on $\rho$. Since the orbit 
equivalence relation of $\alpha$ is $\rho$, we have
$\rho = \bigcup_{g\in G}\rho_g^\alpha$. Let
$(i,j)\in\rho_g^\alpha\cap\rho_h^\alpha$.
Then $\alpha_g(j)=i=\alpha_h(j)$.
Since $\alpha$ is free, it follows that $g=h$. Thus the sets
$\rho_g^\alpha$, $g\in G$, are pairwise disjoint.
It remains to verify compatibility with composition. 
Suppose that $(i,l)\in\rho_g^\alpha$ and 
$(l,j)\in\rho_h^\alpha$. Then $i=\alpha_g(l)$ 
and $l=\alpha_h(j).$
In particular, $\alpha_g(\alpha_h(j))$ is defined. By the
partial action identity, $i = \alpha_g(\alpha_h(j)) =
\alpha_{gh}(j)$.
Hence $(i,j)\in\rho_{gh}^\alpha$.
Therefore, $\rho_g^\alpha\rho_h^\alpha
\subseteq \rho_{gh}^\alpha$. Consequently,
$(\rho_g^\alpha)_{g\in G}$ is a $G$-grading on $\rho$.
By Proposition~\ref{prop:rhodefinesS}, this $G$-grading on $\rho$ induces the very good $G$-grading
$(S_g^\alpha)_{g\in G}$ on $S$
from Definition \ref{def:grading-from-partial-action}.
Finally, since $X_e=\n$ and $\alpha_e=\id_{\n}$, we have
$\rho_e^\alpha = \{(i,i)\mid i\in\n\} = \Delta$.
It follows that $S_e^\alpha = \bigoplus_{i=1}^n Re_{i,i}
= D$. Thus $\mathcal{S}^\alpha \in \mathcal{G}(\rho,G,R)$,
and hence $\Psi$ is well defined.
\end{proof}

\begin{theorem}\label{thm:partial-action-correspondence}
The maps $\Phi$ and $\Psi$ are mutually inverse.
\end{theorem}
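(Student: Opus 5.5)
We verify the two composites $\Psi\circ\Phi$ and $\Phi\circ\Psi$ separately; each reduces to comparing indexing sets of matrix units. The key observation is Remark~\ref{gradedrho}(a): a very good grading on $S$ is completely determined by the family of relations $\{(i,j)\in\rho\mid e_{i,j}\in S_g\}$, because $S_g=\bigoplus_{(i,j)\in\rho_g}Re_{i,j}$. Likewise, a partial action of $G$ on $\n$ is completely determined by the graphs $\{(\alpha_g(j),j)\mid j\in X_{g^{-1}}\}$. These graphs determine the domains $X_{g^{-1}}$ as the sets of second coordinates, the ranges $X_g$ as the sets of first coordinates, and the maps $\alpha_g$ themselves. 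So both statements become equalities of relations.

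\textbf{First, $\Psi(\Phi(\mathcal{S}))=\mathcal{S}$.} Let $\mathcal{S}\in\mathcal{G}(\rho,G,R)$ and put $\alpha=\Phi(\mathcal{S})$. By the definition of $\alpha_g^{\mathcal{S}}$, we have $\rho^{\alpha}_g=\{(\alpha^{\mathcal{S}}_g(j),j)\mid j\in X^{\mathcal{S}}_{g^{-1}}\}$. We claim this equals $\rho^{\mathcal{S}}_g$.
\begin{itemize}
\item If $(i,j)\in\rho_g^{\mathcal{S}}$, then by Proposition~\ref{prop:i-ii-iii} we get $(j,i)\in\rho^{\mathcal{S}}_{g^{-1}}$. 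Hence $j\in X^{\mathcal{S}}_{g^{-1}}$, and $\alpha^{\mathcal{S}}_g(j)=i$.
\item The reverse inclusion is just the defining rule of $\alpha^{\mathcal{S}}_g$.
\end{itemize}
Therefore $S^\alpha_g=\bigoplus_{(i,j)\in\rho^{\mathcal{S}}_g}Re_{i,j}=S_g$. The last equality holds because $\mathcal{S}$ is very good, so $S_g$ is spanned by the $Re_{i,j}$ with $e_{i,j}\in S_g$.

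\textbf{Second, $\Phi(\Psi(\alpha))=\alpha$.} Let $\alpha\in\mathcal{A}(\rho,G)$ and $\mathcal{S}=\Psi(\alpha)$. Since $R\neq 0$ and the sum over $\rho^\alpha_g$ is direct inside $\bigoplus_{(i,j)\in\rho}Re_{i,j}$, we have $e_{i,j}\in S^\alpha_g$ if and only if $(i,j)\in\rho^\alpha_g$. Thus $\rho^{\mathcal{S}}_g=\rho^\alpha_g$. Then:
\begin{itemize}
\item $X^{\mathcal{S}}_g$ is the set of first coordinates of $\rho^\alpha_g$, namely $\alpha_g(X_{g^{-1}})=X_g$, using that $\alpha_g$ is onto $X_g$.
\item $X^{\mathcal{S}}_{g^{-1}}=X_{g^{-1}}$ by the same argument applied to $g^{-1}$.
\item $\alpha^{\mathcal{S}}_g(j)=i$ if and only if $(i,j)\in\rho^\alpha_g$, if and only if $i=\alpha_g(j)$.
\end{itemize}
Hence $\alpha^{\mathcal{S}}=\alpha$.

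\textbf{Expected obstacle.} The only delicate point is the domain identification in the second part. $X^{\mathcal{S}}_{g^{-1}}$ is defined via first coordinates of $\rho^{\mathcal{S}}_{g^{-1}}$, whereas the natural domain read off from $\rho^\alpha_g$ is its set of second coordinates. To reconcile these we use that $\alpha_{g^{-1}}=\alpha_g^{-1}$. This is a standard consequence of (P1) and (P2): apply (P2) to $h=g$ and $g^{-1}$ to get $\alpha_{g^{-1}}\alpha_g=\id$ on $X_{g^{-1}}$, and symmetrically. It gives $\rho^\alpha_{g^{-1}}=(\rho^\alpha_g)^{-1}$, so the first coordinates of $\rho^\alpha_{g^{-1}}$ are exactly $X_{g^{-1}}$. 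Alternatively, this follows from Proposition~\ref{prop:i-ii-iii}(iv) applied to the grading $(\rho^\alpha_g)_{g\in G}$.
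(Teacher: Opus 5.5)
Your proof is correct and follows the same route as the paper: both composites reduce to comparing the degree-$g$ relations with the graphs $\{(\alpha_g(j),j)\mid j\in X_{g^{-1}}\}$, and you supply details the paper leaves implicit, namely the use of $R\neq 0$ to get $\rho^{\mathcal{S}}_g=\rho^\alpha_g$ and the explicit domain check. Your bullet already settles $X^{\mathcal{S}}_{g^{-1}}=X_{g^{-1}}$ through the surjectivity of $\alpha_{g^{-1}}:X_g\to X_{g^{-1}}$, so the appeal to $\alpha_{g^{-1}}=\alpha_g^{-1}$ in your final paragraph is correct but not actually needed.
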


\begin{proof}
Let $\mathcal{S}=(S_g)_{g\in G} \in \mathcal{G}(\rho,G,R)$,
and let $(\rho_g^{\mathcal{S}})_{g\in G}$
be the associated $G$-grading on $\rho$.
By the definitions of $\Phi$ and $\Psi$, the 
degree-$g$ relation
associated to $\Psi(\Phi(\mathcal{S}))$ is
\[
\bigl\{(\alpha_g^{\mathcal{S}}(j),j)
\bigm|
j\in X_{g^{-1}}^{\mathcal{S}}
\bigr\} = \rho_g^{\mathcal{S}}.
\]
Thus the degree-$g$ component of
$\Psi(\Phi(\mathcal{S}))$ is $S_g$ for every $g\in G$. 
Hence, $\Psi\circ\Phi = \id_{\mathcal{G}(\rho,G,R)}$.

Conversely, let $\alpha=(X_g,\alpha_g)_{g\in G}
\in\mathcal{A}(\rho,G)$.
The degree-$g$ relation associated to $\Psi(\alpha)$ is
$\rho_g^\alpha = \{(\alpha_g(j),j)\mid j\in X_{g^{-1}}\}$.
The set of first coordinates occurring in $\rho_g^\alpha$ is
$\alpha_g(X_{g^{-1}}) = X_g$.
Moreover, the partial bijection recovered from
$\rho_g^\alpha$ is the map $j\mapsto\alpha_g(j)$,
$j \in X_{g^{-1}}$.
Thus the construction in
Lemma~\ref{lem:grading-to-partial-action} recovers both 
$X_g$ and $\alpha_g$ for every $g\in G$. Consequently,
$\Phi(\Psi(\alpha)) = \alpha$. Hence
$\Phi\circ\Psi = \id_{\mathcal{A}(\rho,G)}$.
Therefore, $\Phi$ and $\Psi$ are mutually inverse.
\end{proof}

\begin{cor}
Let $G$ be a finite group, and let $C_1,\ldots,C_k$ be the equivalence classes of $\rho$. 
The following two assertions are equivalent:
\begin{enumerate}[{\rm (i)}]
	\item $\mathcal{A}(\rho,G)$ is nonempty;
	\item $|C_i|\leq |G|$ for every $i \in \{1,\ldots,k\}$.
\end{enumerate}
Whenever these two equivalent assertions hold true, we have
\begin{displaymath}
		|\mathcal{A}(\rho,G)| =
		\prod_{i=1}^{k}
		\frac{(|G|-1)!}{(|G|-|C_i|)!}.
\end{displaymath}
\end{cor}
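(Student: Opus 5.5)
By Theorem~\ref{thm:partial-action-correspondence} (together with Lemmas~\ref{lem:grading-to-partial-action} and~\ref{lem:partial-action-to-grading}), $\mathcal{A}(\rho,G)$ is in bijection with $\mathcal{G}(\rho,G,R)$. By Remark~\ref{gradedrho}(a), $\mathcal{G}(\rho,G,R)$ is in bijection with the set of $G$-gradings $(\rho_g)_{g\in G}$ on $\rho$ satisfying $\rho_e=\Delta$. So I will count these gradings of the equivalence relation $\rho$. The plan is to fix a base point $c_m\in C_m$ in each class, and to show that such gradings correspond bijectively to families $(f_m)_{m=1}^k$ of injective maps $f_m\colon C_m\to G$ with $f_m(c_m)=e$.

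\emph{From gradings to labelings.} Given a grading with $\rho_e=\Delta$, define $f_m(j)$ to be the unique $g\in G$ with $(j,c_m)\in\rho_g$. This is well defined because the $\rho_g$ are pairwise disjoint and cover $\rho$. We have $f_m(c_m)=e$ since $\Delta\subseteq\rho_e$. For injectivity, suppose $(j,c_m),(j',c_m)\in\rho_g$. Proposition~\ref{prop:i-ii-iii}(iv) gives $(c_m,j')\in\rho_{g^{-1}}$, hence $(j,j')\in\rho_g\rho_{g^{-1}}\subseteq\rho_e=\Delta$, so $j=j'$. The labeling also determines the whole grading. For $i,j\in C_m$ we have $(i,j)=(i,c_m)(c_m,j)\in\rho_{f_m(i)}\rho_{f_m(j)^{-1}}\subseteq\rho_{f_m(i)f_m(j)^{-1}}$, again using Proposition~\ref{prop:i-ii-iii}(iv). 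Hence
\[
\rho_g=\bigcup_{m=1}^k\{(i,j)\in C_m\times C_m\mid f_m(i)f_m(j)^{-1}=g\}.
\]
In particular, distinct gradings yield distinct labelings.

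\emph{From labelings to gradings.} Conversely, given injective $f_m$ with $f_m(c_m)=e$, define $\rho_g$ by the displayed formula. These sets are pairwise disjoint and cover $\rho$, because each pair $(i,j)\in\rho$ receives exactly one degree. Multiplicativity holds because $f_m(i)f_m(l)^{-1}\,f_m(l)f_m(j)^{-1}=f_m(i)f_m(j)^{-1}$, and composable pairs lie in the same class. Moreover $\rho_e=\Delta$ precisely by injectivity of the $f_m$. Applying the first construction to this grading returns $f_m$, since $(j,c_m)$ has degree $f_m(j)f_m(c_m)^{-1}=f_m(j)$. So the two constructions are mutually inverse.

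\emph{Counting.} An injection $C_m\to G$ with $c_m\mapsto e$ is the same as an injection $C_m\setminus\{c_m\}\to G\setminus\{e\}$. Such an injection exists if and only if $|C_m|-1\le|G|-1$, which gives the equivalence of (i) and (ii). There are $(|G|-1)!/(|G|-|C_m|)!$ of them for each class, and taking the product over the $k$ classes yields the formula. The main point requiring care is showing that the labeling recovers the grading, which uses $(\rho_g)^{-1}=\rho_{g^{-1}}$ from Proposition~\ref{prop:i-ii-iii}; the remaining steps are routine.
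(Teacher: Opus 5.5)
Your proof is correct and follows the paper's own argument. Both reduce, via Theorem~\ref{thm:partial-action-correspondence}, to counting $G$-gradings of $\rho$ with neutral part $\Delta$, encode each such grading by a family of pointed injections $C_m\to G$, and count those injections; your labeling $f_m(j)=\deg(j,c_m)$ is just the pointwise inverse of the paper's $a_i(x)=f(m_i,x)$, and you spell out in more detail (via $(\rho_g)^{-1}=\rho_{g^{-1}}$) the formula for $\deg(i,j)$ and the mutual inverseness that the paper states briefly.
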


\begin{proof}
For every $i \in \{1,\ldots,k\}$, choose some $m_i\in C_i$.
By Theorem~\ref{thm:partial-action-correspondence}, it suffices to
count the maps $f\colon \rho\to G$ satisfying
$f(x,y)f(y,z)=f(x,z)$ for $(x,y),(y,z) \in \rho$,
and
$f^{-1}(e)=\Delta.$

For $i \in \{1,\ldots,k\}$,
define $a_i : C_i \to G$ by $a_i(x):=f(m_i,x)$ for $x\in C_i$.
Then, for all $i\in \{1,\ldots,k\}$ and $x,y\in C_i$, we have $a_i(m_i)=e$ and
$f(x,y)=a_i(x)^{-1}a_i(y)$.
Consequently, $f^{-1}(e)=\Delta$ if and only if every $a_i$ is
injective.

Conversely, given injections $a_i : C_i \hookrightarrow G$ satisfying $a_i(m_i)=e$
for every $i$, define $f(x,y):=a_i(x)^{-1}a_i(y)$ whenever $x,y \in C_i$.
Injectivity gives $f(x,y)=e$ if and only if $x=y$. 
Hence there is a bijection between
$\mathcal{A}(\rho,G)$
and 
$\prod_{i=1}^{k}
\bigl\{a_i\colon C_i\hookrightarrow G\mid a_i(m_i)=e\bigr\}.$

For each $i$, the set $\{a_i\colon C_i\hookrightarrow G\mid a_i(m_i)=e\bigr\}$ is nonempty exactly when $|C_i| \leq |G|$.

Suppose that $\mathcal{A}(\rho,G)$ is nonempty.
The number of choices for $a_i$ is
\begin{displaymath}
\frac{(|G|-1)!}{(|G|-|C_i|)!}.	
\end{displaymath}
Multiplying over the equivalence classes proves the desired equality.
\end{proof}

The equivalence relation hypothesis in 
Theorem~\ref{thm:partial-action-correspondence} is 
essential. If $\rho$ is merely a preorder, a 
homogeneous component of a very good grading need not 
be the graph of a partial function, even when the 
neutral component consists precisely of the diagonal 
matrices, as the following example shows.

\begin{exmp}\label{ex:non-equivalence-no-partial-action}
Let $G:=\mathbb Z_2=\{e,g\}$, and consider the preorder
$\rho := \Delta\cup\{(1,3),(2,3)\}$
on $\overline{3}$. Define a $G$-grading on $\rho$ by
$\rho_e:=\Delta$ and $\rho_g:=\{(1,3),(2,3)\}$.
This grading induces a very good $G$-grading on
$M_3(\rho,R)$ whose neutral component is
$D=\bigoplus_{i=1}^3 Re_{i,i}$.
However, $\rho_g$ is not the graph of a partial function on
$\overline{3}$. Under the convention used 
above, 
the relation
$\rho_g$ would require simultaneously
$\alpha_g(3)=1$ and $\alpha_g(3)=2$.
Consequently, this very good grading cannot be 
encoded by a partial
action of $G$ on $\overline{3}$ in the manner described 
above.
\end{exmp}

\begin{cor}\label{cor:partial-epsilon-crossed}
Suppose that $\rho$ is an equivalence relation on $\n$ and that
$(S_g)_{g\in G}$
is a very good $G$-grading
on $M_n(\rho,R)$ with $S_e=D=\bigoplus_{i=1}^n Re_{i,i}$.
Then the G-graded ring $M_n(\rho,R)$ is an epsilon-crossed product.
\end{cor}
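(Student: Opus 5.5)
The plan is to reduce to the relation level and then invoke Theorem~\ref{relring}(v). By Theorem~\ref{relring}(v), it suffices to show that the inducing $G$-grading $(\rho_g)_{g\in G}$ on $\rho$ is an epsilon-crossed product. The hypothesis $S_e=D$ together with Remark~\ref{gradedrho}(a) gives $\rho_e=\Delta$. Since $\rho$ is an equivalence relation, Proposition~\ref{prop:i-ii-iii} shows that the grading on $\rho$ is epsilon-strong and that $(\rho_g)^{-1}=\rho_{g^{-1}}$ for every $g\in G$. By Remark~\ref{rem:epsilon-set}, the epsilon sets are then forced to be $E_g=\rho_g\rho_{g^{-1}}\cap\Delta$.

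For each $g\in G$, I will take $U_g:=\rho_g$ and $V_{g^{-1}}:=\rho_{g^{-1}}$, so it remains to verify the two equalities $\rho_g\rho_{g^{-1}}=E_g$ and $\rho_{g^{-1}}\rho_g=E_{g^{-1}}$.

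\begin{itemize}
\item By the grading property, $\rho_g\rho_{g^{-1}}\subseteq\rho_e=\Delta$. Hence $\rho_g\rho_{g^{-1}}=\rho_g\rho_{g^{-1}}\cap\Delta=E_g$.
\item Replacing $g$ by $g^{-1}$ in the same argument gives $\rho_{g^{-1}}\rho_g=E_{g^{-1}}$.
\end{itemize}

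This shows that the grading on $\rho$ is an epsilon-crossed product, and Theorem~\ref{relring}(v) then yields the claim for $M_n(\rho,R)$.

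An alternative route uses Theorem~\ref{thm:partial-action-correspondence}: each $\rho_g$ is the graph of a partial bijection $\alpha_g\colon X_{g^{-1}}\to X_g$, and one writes $s_g=\sum_{j\in X_{g^{-1}}}e_{\alpha_g(j),j}$ with $t_{g^{-1}}$ its transpose. This is unnecessary, since the direct argument above already suffices.

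There is no real obstacle. The only point needing care is that the epsilon sets in the definition of epsilon-crossed product must be exactly $E_g=\rho_g\rho_{g^{-1}}\cap\Delta$. That is guaranteed by Remark~\ref{rem:epsilon-set}, and the inclusion $\rho_g\rho_{g^{-1}}\subseteq\rho_e=\Delta$ makes the intersection with $\Delta$ superfluous.
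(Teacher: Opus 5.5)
Your argument is correct, but it takes a different route from the paper's. You work entirely at the level of the graded relation. From $S_e=D$ you get $\rho_e=\Delta$, so $\rho_g\rho_{g^{-1}}\subseteq\rho_e=\Delta$, and the choice $U_g:=\rho_g$, $V_{g^{-1}}:=\rho_{g^{-1}}$ then satisfies the relation-level epsilon-crossed product conditions. Theorem~\ref{relring}(v) transfers the property to $M_n(\rho,R)$.

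The paper instead passes through the free partial action $\alpha$ attached to the grading by Theorem~\ref{thm:partial-action-correspondence}. It writes down $u_g=\sum_{j\in X_{g^{-1}}}e_{\alpha_g(j),j}$ and $v_{g^{-1}}=\sum_{i\in X_g}e_{\alpha_{g^{-1}}(i),i}$, and checks $u_gv_{g^{-1}}=\epsilon_g$ and $v_{g^{-1}}u_g=\epsilon_{g^{-1}}$ directly in the ring, using the explicit formula for $\epsilon_g$ from Corollary~\ref{cor:epsilon-criterion}. The witnesses turn out to be the same elements, namely $\sum_{(i,j)\in\rho_g}e_{i,j}$ and $\sum_{(k,l)\in\rho_{g^{-1}}}e_{k,l}$, so the difference is one of packaging.

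Your version is shorter and does not need the partial-action correspondence. It also does not need the identity $(\rho_g)^{-1}=\rho_{g^{-1}}$, which you quote but never use. It makes clear that the only inputs are epsilon-strongness of the relation grading and $\rho_e=\Delta$. The paper's version buys the description $\epsilon_g=\sum_{i\in X_g}e_{i,i}$ in terms of the domains of the partial action, which it reuses in Corollary~\ref{cor:strong-global-partial} to characterize strong gradings as those coming from global actions.
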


\begin{proof}
Let $\mathcal{S}=(S_g)_{g\in G} \in \mathcal{G}(\rho,G,R)$,
and let $\alpha=(X_g,\alpha_g)_{g\in G}$
be the corresponding free partial action under
Theorem~\ref{thm:partial-action-correspondence}. 
For every $g\in G$, put 
$u_g := \sum_{j\in X_{g^{-1}}} e_{\alpha_g(j),j} \in S_g$
and $v_{g^{-1}} :=
\sum_{i\in X_g} e_{\alpha_{g^{-1}}(i),i} \in S_{g^{-1}}$.
Note that, by Corollaries~\ref{cor:epsilon-criterion}--\ref{cor:equivalence-epsilon-ring},
$\epsilon_g = \sum_{i \in \overline{n},
(i,i)\in\rho_g\rho_{g^{-1}}} e_{i,i}.$
Hence $u_gv_{g^{-1}} = \sum_{i\in X_g}e_{i,i} = \epsilon_g$
and $v_{g^{-1}}u_g = \sum_{j\in X_{g^{-1}}}e_{j,j} =
\epsilon_{g^{-1}}$.
Thus $\mathcal{S}$ is an epsilon-crossed product.
\end{proof}

We next specialize Theorem~\ref{thm:partial-action-correspondence} to
full matrix rings. Recall that a very good $G$-grading on
$M_n(\rho,R)$ is called \emph{elementary} if it is induced by an
$n$-tuple $(g_1,\ldots,g_n)\in G^n$ in the sense that
$\deg(e_{i,j})=g_ig_j^{-1}$ whenever $(i,j) \in \rho$.
It is called \emph{strictly elementary} if the elements
$g_1,\ldots,g_n$ are pairwise distinct.

\begin{cor}\label{cor:strict-elementary-partial}
Under the correspondence in
Theorem~\ref{thm:partial-action-correspondence}, the strictly
elementary $G$-gradings on $M_n(R)$ correspond precisely to the free
and transitive partial actions of $G$ on $\n$.
\end{cor}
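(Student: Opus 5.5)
We work with $\rho=\overline{n}^{\,2}$ and the bijection $\Phi$ of Theorem~\ref{thm:partial-action-correspondence}, which identifies $\mathcal{G}(\overline{n}^{\,2},G,R)$ with $\mathcal{A}(\overline{n}^{\,2},G)$. Every partial action in $\mathcal{A}(\overline{n}^{\,2},G)$ has orbit relation $\overline{n}^{\,2}$, which is exactly what it means to be transitive. So the free transitive partial actions of $G$ on $\n$ are precisely the elements of $\mathcal{A}(\overline{n}^{\,2},G)$. It therefore suffices to show that the very good $G$-gradings $(S_g)_{g\in G}$ on $M_n(R)$ with $S_e=D$ are exactly the strictly elementary gradings. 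The correspondence of Theorem~\ref{thm:partial-action-correspondence} then restricts to the claimed one.

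\emph{Strictly elementary implies $S_e=D$.} Let the grading be induced by $(g_1,\ldots,g_n)$ with the $g_i$ pairwise distinct. Then $\deg(e_{i,j})=g_ig_j^{-1}=e$ holds exactly when $g_i=g_j$, that is, exactly when $i=j$. Hence $\rho_e=\Delta$ and $S_e=D$, so the grading lies in $\mathcal{G}(\overline{n}^{\,2},G,R)$.

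\emph{$S_e=D$ implies strictly elementary.} This is the step that needs an argument, namely that every very good grading on $M_n(R)$ is elementary. Let $f\colon\overline{n}^{\,2}\to G$ send $(i,j)$ to the unique $g$ with $(i,j)\in\rho_g$. The grading condition $\rho_g\rho_h\subseteq\rho_{gh}$ gives $f(i,k)=f(i,j)f(j,k)$ for all $i,j,k$. Put $g_i:=f(i,1)$. Since $f(1,1)=e$ by Proposition~\ref{prop:deltasubsetrho}(i) and $f(1,j)=f(j,1)^{-1}$ by Lemma~\ref{lem:TransposeInverse}, we get
\[
f(i,j)=f(i,1)f(1,j)=g_ig_j^{-1}.
\]
So the grading is elementary. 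If $g_i=g_j$, then $(i,j)\in\rho_e=\Delta$, so $i=j$; thus the $g_i$ are pairwise distinct and the grading is strictly elementary.

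Combining the two directions, the strictly elementary gradings form exactly the set $\mathcal{G}(\overline{n}^{\,2},G,R)$, and $\Phi$ maps this set bijectively onto the free transitive partial actions. The only real step is the cocycle argument showing $f(i,j)=g_ig_j^{-1}$; the rest follows directly from the definitions.
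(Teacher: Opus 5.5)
Your proposal is correct and follows essentially the same route as the paper: both reduce the claim to showing that the very good gradings on $M_n(R)$ with $S_e=D$ are exactly the strictly elementary ones, using $g_i:=\deg(e_{i,1})$ and $e_{i,j}=e_{i,1}e_{1,j}$ to obtain $\deg(e_{i,j})=g_ig_j^{-1}$. Your explicit observation that having orbit relation $\overline{n}^{\,2}$ is the same as being transitive, so that $\mathcal{A}(\overline{n}^{\,2},G)$ is precisely the set of free transitive partial actions, spells out a step the paper leaves implicit.
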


\begin{proof}
Let $M_n(R)=\bigoplus_{g\in G}S_g$
be a strictly elementary grading induced by
$(g_1,\ldots,g_n)$. Then
$\deg(e_{i,j})=e \Leftrightarrow g_i=g_j
\Leftrightarrow i=j$. Hence $S_e=D$.
Since the underlying equivalence relation of $M_n(R)$ is
$\n\times\n$, its corresponding partial action has only 
one orbit and is therefore transitive.

Conversely, suppose that a very good grading on $M_n(R)$ 
satisfies $S_e=D$. Put $g_i:=\deg(e_{i,1})$, $i \in \n$.
Since $e_{i,j}=e_{i,1}e_{1,j}$ and 
$\deg(e_{1,j})=g_j^{-1}$, we obtain 
$\deg(e_{i,j})=g_ig_j^{-1}$.
If $g_i=g_j$, then $e_{i,j}\in S_e=D$, and hence $i=j$.
Thus the grading is strictly elementary.

The result now follows from
Theorem~\ref{thm:partial-action-correspondence}.
\end{proof}

\begin{remark}\label{rem:several-orbits}
For an equivalence relation with more than one equivalence class, the
condition $S_e=D$ need not imply that the grading can be induced by a
single injective $n$-tuple in $G$. Thus, in
Theorem~\ref{thm:partial-action-correspondence}, the natural general
condition is $S_e=D$, whereas strict elementarity is recovered in the
full matrix case by Corollary~\ref{cor:strict-elementary-partial}.
\end{remark}

The strong-grading condition has a particularly simple interpretation.

\begin{cor}\label{cor:strong-global-partial}
Under the correspondence in
Theorem~\ref{thm:partial-action-correspondence}, the $G$-grading on
$M_n(\rho,R)$ is strong 
if and only if the corresponding
partial action is global.
\end{cor}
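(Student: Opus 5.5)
By Theorem~\ref{relring}(iv), the grading on $M_n(\rho,R)$ is strong if and only if the underlying grading $(\rho_g)_{g\in G}$ on $\rho$ is strong, that is, $\rho_g\rho_{g^{-1}}=\rho_e$ for every $g\in G$. Under the correspondence of Theorem~\ref{thm:partial-action-correspondence} we have $\rho_e=\Delta$ and $\rho_g=\{(\alpha_g(j),j)\mid j\in X_{g^{-1}}\}$. So the plan is to reduce everything to the single set-theoretic identity
\[
\rho_g\rho_{g^{-1}}=\{(i,i)\mid i\in X_g\}\qquad (g\in G).
\]
Once this holds, strongness, namely $\rho_g\rho_{g^{-1}}=\Delta$ for all $g$, is equivalent to $X_g=\n$ for all $g$. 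That is exactly globality of $\alpha$.

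To prove the identity, I will use Proposition~\ref{prop:i-ii-iii}(iv), which gives $\rho_{g^{-1}}=(\rho_g)^{-1}=\{(j,\alpha_g(j))\mid j\in X_{g^{-1}}\}$.

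Take a product $(i,l)(l,k)$ with $(i,l)\in\rho_g$ and $(l,k)\in\rho_{g^{-1}}$. Then $i=\alpha_g(l)$, and $(k,l)\in\rho_g$ gives $k=\alpha_g(l)$. Hence $i=k=\alpha_g(l)\in X_g$, since $\alpha_g$ is a function.

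Conversely, for $i\in X_g$ write $i=\alpha_g(j)$ with $j\in X_{g^{-1}}$. Then $(i,i)=(i,j)(j,i)\in\rho_g\rho_{g^{-1}}$.

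Alternatively, one can read the identity off Corollary~\ref{cor:partial-epsilon-crossed}: there $u_gv_{g^{-1}}=\sum_{i\in X_g}e_{i,i}=\epsilon_g$, so $\rho_g\rho_{g^{-1}}\cap\Delta$ corresponds to $X_g$. Strongness then amounts to $\epsilon_g=1_\Delta$ for all $g$. I would still give the direct computation above, since it also shows that $\rho_g\rho_{g^{-1}}\subseteq\Delta$, and this inclusion is needed to turn "$\supseteq\Delta$" into equality with $\rho_e$.

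I do not expect a genuine obstacle. The only point needing care is to use the graph property of $\rho_g$ (well-definedness of $\alpha_g$, from Lemma~\ref{lem:grading-to-partial-action}) to conclude that $\rho_g\rho_{g^{-1}}$ lies inside the diagonal. The rest is bookkeeping through Theorem~\ref{relring}(iv) and the bijection $\Phi$, $\Psi$.
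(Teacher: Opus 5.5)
Your proof is correct. It reaches the same key identification as the paper by a slightly different route.

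The paper works at the ring level. It invokes Corollary~\ref{cor:equivalence-epsilon-ring} to know the grading is epsilon-strong, and takes $\epsilon_g=\sum_{i\in X_g}e_{i,i}$ from the proof of Corollary~\ref{cor:partial-epsilon-crossed}. It then uses the general fact that an epsilon-strong grading is strong exactly when $\epsilon_g=1_\Delta$ for every $g$.

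You instead pass to the relation via Theorem~\ref{relring}(iv) and prove $\rho_g\rho_{g^{-1}}=\{(i,i)\mid i\in X_g\}$ directly. Your route is more self-contained. It needs neither the epsilon-strong machinery nor the general fact about $\epsilon_g=1_\Delta$. It also makes explicit the equality $\{i\mid (i,i)\in\rho_g\rho_{g^{-1}}\}=X_g$, which the paper leaves implicit when it equates $u_gv_{g^{-1}}=\sum_{i\in X_g}e_{i,i}$ with $\epsilon_g$. The paper's route is shorter given what has already been established.

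One small correction to your commentary: the inclusion $\rho_g\rho_{g^{-1}}\subseteq\Delta$ is not a delicate point requiring the graph property of $\rho_g$. It follows at once from the grading axiom $\rho_g\rho_{g^{-1}}\subseteq\rho_e$ together with $\rho_e=\Delta$. Likewise, $i\in X_g$ for $(i,l)\in\rho_g$ is just the definition of $X_g$ as the set of first coordinates of $\rho_g$. So your "only point needing care" is automatic, and the proof has no real obstacle.
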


\begin{proof}
Let $\alpha=(X_g,\alpha_g)_{g\in G}$
be the partial action corresponding to the very good grading
$\mathcal{S}=(S_g)_{g\in G}$,
and let $(\rho_g)_{g\in G}$ be the associated $G$-grading on
$\rho$. Since $\rho$ is an equivalence relation,
Corollary~\ref{cor:equivalence-epsilon-ring} shows that
$\mathcal{S}$ is epsilon-strongly $G$-graded.

As observed in the proof of Corollary~\ref{cor:partial-epsilon-crossed},
we have $\epsilon_g = \sum_{i \in X_g} e_{ii}$ for each $g\in G$.
Therefore, an epsilon-strong grading is strong 
$\Leftrightarrow$ $\epsilon_g=1_\Delta$ for each $g\in G$ $\Leftrightarrow$
$X_g = \n$ for each $g\in G$ $\Leftrightarrow$ 
the partial action $\alpha$ is global.
\end{proof}

\begin{cor}\label{cor:regular-action}
Under the correspondence in
Theorem~\ref{thm:partial-action-correspondence}, a strictly
elementary $G$-grading on $M_n(R)$ is strong if and only
if the corresponding partial action is global.
Consequently, $M_n(R)$ admits a strong strictly
elementary $G$-grading if and only if $|G|=n$. In that case, the
corresponding action is equivalent to the regular action of $G$ on
itself.
\end{cor}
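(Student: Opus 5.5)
The argument combines Corollary~\ref{cor:strong-global-partial} with Corollary~\ref{cor:strict-elementary-partial} and Theorem~\ref{thm:strong-cardinality}. The first assertion is immediate. A strictly elementary grading on $M_n(R)$ is in particular a very good grading with $\rho=\overline{n}^{\,2}$ an equivalence relation and $S_e=D$. It therefore lies in $\mathcal{G}(\overline{n}^{\,2},G,R)$, and Corollary~\ref{cor:strong-global-partial} applies verbatim: the grading is strong if and only if the corresponding partial action is global.

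For the existence statement, suppose first that $M_n(R)$ carries a strong strictly elementary $G$-grading. Theorem~\ref{thm:strong-cardinality} gives $|G|\leq n$. For the reverse inequality, I would use that the grading is induced by pairwise distinct $g_1,\ldots,g_n$, which gives $|G|\geq n$. Alternatively, the corresponding action is global, free and transitive by Corollary~\ref{cor:strict-elementary-partial}. Freeness says that $j\mapsto \alpha_g(j)$ determines $g$, so fixing $j=1$, the map $G\to\overline{n}$, $g\mapsto\alpha_g(1)$, is injective, since every $\alpha_g$ is defined on all of $\overline{n}$. Hence $|G|\leq n$, and transitivity makes this map surjective. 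Either way $|G|=n$.

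Conversely, if $|G|=n$, I would enumerate $G=\{g_1,\ldots,g_n\}$ and take the elementary grading $\deg(e_{i,j})=g_ig_j^{-1}$, which is strictly elementary. It is strong because $S_gS_{g^{-1}}$ contains each $e_{i,i}$: given $i$, choose $j$ with $g_j=g^{-1}g_i$. Such a $j$ exists because the $g_j$ exhaust $G$, and then $e_{i,i}=e_{i,j}e_{j,i}$ with $\deg e_{i,j}=g$. Hence $S_gS_{g^{-1}}\supseteq D$, and multiplying by $S_e$ gives $S_gS_{g^{-1}}=S_e$.

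Finally, I would identify the action. In the global case, the map $\phi\colon G\to\overline{n}$, $g\mapsto\alpha_g(1)$, is a bijection by the freeness and transitivity argument above. Since a global partial action is a genuine action by (P2), we get $\phi(hg)=\alpha_{hg}(1)=\alpha_h(\alpha_g(1))=\alpha_h(\phi(g))$. Thus $\phi$ is a $G$-equivariant bijection from the left regular action onto $\alpha$. The only mildly delicate step is checking that the globality hypothesis really upgrades (P2) to full composition with all domains equal to $\overline{n}$; this is immediate since $X_g=\overline{n}$ for all $g$.
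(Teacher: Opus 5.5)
Your proof is correct and follows the paper's route. You use Corollary~\ref{cor:strong-global-partial} for the first assertion, Corollary~\ref{cor:strict-elementary-partial} for freeness and transitivity, and the fact that a free transitive global action is regular; the paper merely asserts that last fact, while you prove it via the equivariant bijection $g\mapsto\alpha_g(1)$. The remaining deviations are minor. You alternatively bound $|G|\leq n$ with Theorem~\ref{thm:strong-cardinality}, and for the converse you check strongness directly (via $e_{i,i}=e_{i,j}e_{j,i}$ with $g_j=g^{-1}g_i$), where the paper instead transports the regular action through the correspondence and Corollary~\ref{cor:strong-global-partial}.
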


\begin{proof}
By Corollary~\ref{cor:strict-elementary-partial}, a partial action
corresponding to a strictly elementary grading on $M_n(R)$ is free and
transitive. By Corollary~\ref{cor:strong-global-partial}, a $G$-grading
is strong if and only if this partial action is global.
A free and transitive global action of $G$ on $\n$ is regular, and
therefore $|G|=n$. Conversely, if $|G|=n$, then, after identifying
$\n$ with $G$, the regular action of $G$ on itself corresponds to a
strong strictly elementary $G$-grading on $M_n(R)$.
\end{proof}

\begin{exmp}\label{ex:C6-partial}
Write $C_6:=\mathbb Z/6\mathbb Z$ additively, and put
$X:=\{1,2,3,4\}\subseteq C_6$. Restrict the translation action of
$C_6$ on itself to $X$. More precisely, for $g\in C_6$, put
$X_g:=X\cap(g+X)$ and define
$\alpha_g : X_{-g} \to X_g$ by $\alpha_g(x):=g+x$,
for $x \in X_{-g}$. This is a free and transitive 
partial action of $C_6$ on $X$. Under
the identification $X=\overline{4}$, the corresponding strictly
elementary grading on $S:=M_4(R)$ is induced by the tuple
$(1,2,3,4)$ and is given by $S_0 =\bigoplus_{i=1}^4Re_{i,i}$,
$ S_1 = Re_{2,1}\oplus Re_{3,2}\oplus Re_{4,3}$,
$S_2 = Re_{3,1}\oplus Re_{4,2}$,
$S_3 = Re_{1,4}\oplus Re_{4,1}$,
$S_4 = Re_{1,3}\oplus Re_{2,4}$,
$S_5 = Re_{1,2}\oplus Re_{2,3}\oplus Re_{3,4}$.
The partial action is not global, and hence the grading 
is not strong by Corollary~\ref{cor:strong-global-partial}. 
It is, however, an epsilon-crossed product by 
Corollary~\ref{cor:partial-epsilon-crossed}.
\end{exmp}

Let $S,S'$ be $G$-graded rings
with gradings $(S_g)_{g\in G}$ and $(S'_g)_{g\in G}$.
Recall that $S$ and $S'$, and their gradings, are said to be \emph{graded isomorphic}
if there is a ring isomorphism $\phi : S \to S'$
such that $\phi(S_g) = S'_g$ for every $g\in G$.
In that case, $\phi$ is referred to as a \emph{graded isomorphism}.

We finish this section by, in the case where $R$ is a field, classifying the gradings in
Theorem~\ref{thm:partial-action-correspondence} up to graded
isomorphism.

\begin{defn}\label{def:equivalent-partial-actions}
Two partial actions
$\alpha=(X_g,\alpha_g)_{g\in G}$ and
$\beta=(Y_g,\beta_g)_{g\in G}$ of $G$ on sets $X$ and $Y$,
respectively, are called \emph{equivalent} if there is a bijection $\sigma : X \to Y$ with $\sigma(X_g) = Y_g$
and $\sigma(\alpha_g(x)) = \beta_g(\sigma(x))$
for all $g \in G$ and $x \in X_{g^{-1}}$.
\end{defn}

\begin{theorem}\label{thm:classification-partial-actions}
Let $K$ be a field, let $\rho$ be an equivalence relation 
on $\n$, and put $S:=M_n(\rho,K)$ and 
$D: = \bigoplus_{i=1}^n Ke_{i,i}$.
The bijection in
Theorem~\ref{thm:partial-action-correspondence} induces a bijection
between the set of graded $K$-algebra isomorphism classes of very good
$G$-gradings on $S$ with $S_e=D$, and the set of equivalence
classes of free partial actions of $G$ on $\n$ whose orbit equivalence
relation is $\rho$.
\end{theorem}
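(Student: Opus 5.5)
The plan is to prove that the bijection $\Phi$ of Theorem~\ref{thm:partial-action-correspondence} sends graded-isomorphic gradings to equivalent partial actions, and that $\Psi$ sends equivalent partial actions to graded-isomorphic gradings. Together with $\Phi\circ\Psi=\id$ and $\Psi\circ\Phi=\id$, this shows that $\Phi$ descends to a well-defined bijection between the two quotient sets. The transfer in both directions goes through a permutation $\sigma$ of $\n$, realized on the ring side by conjugation with the permutation matrix $P_\sigma:=\sum_i e_{\sigma(i),i}$.

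\emph{From equivalent actions to graded-isomorphic gradings.} Let $\alpha=(X_g,\alpha_g)_{g\in G}$ and $\beta=(Y_g,\beta_g)_{g\in G}$ lie in $\mathcal{A}(\rho,G)$, and suppose they are equivalent via a bijection $\sigma:\n\to\n$.
\begin{enumerate}[{\rm (1)}]
\item Define $\phi(x):=P_\sigma xP_\sigma^{-1}$. This is a $K$-algebra automorphism of $M_n(K)$ satisfying $\phi(e_{i,j})=e_{\sigma(i),\sigma(j)}$.
\item Take $(i,j)\in\rho$. Then $i=\alpha_g(j)$ for some $g$ with $j\in X_{g^{-1}}$. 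Hence $\sigma(i)=\beta_g(\sigma(j))$, so $(\sigma(i),\sigma(j))\in\rho_g^\beta$.
\item It follows that $\sigma\times\sigma$ maps $\rho_g^\alpha$ into $\rho_g^\beta$. The same argument applied to $\sigma^{-1}$ gives the reverse inclusion, so the map is onto.
\item Consequently $\phi$ restricts to a $K$-algebra automorphism of $S$ with $\phi(S^\alpha_g)=S^\beta_g$ for every $g\in G$.
\end{enumerate}

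\emph{From graded-isomorphic gradings to equivalent actions.} Let $\mathcal{S},\mathcal{S}'\in\mathcal{G}(\rho,G,K)$, and let $\phi:S\to S$ be a graded $K$-algebra isomorphism from $\mathcal{S}$ to $\mathcal{S}'$.
\begin{enumerate}[{\rm (1)}]
\item Since $S_e=S'_e=D$, $\phi$ restricts to a $K$-algebra automorphism of $D\cong K^n$.
\item The idempotents of $D$ are the elements $\sum_{i\in I}e_{i,i}$ for $I\subseteq\n$. The primitive ones are exactly the $e_{i,i}$, and an automorphism preserves primitivity. Hence there is a permutation $\sigma$ of $\n$ with $\phi(e_{i,i})=e_{\sigma(i),\sigma(i)}$ for all $i$.
\item Take $(i,j)\in\rho^{\mathcal{S}}_g$. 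Writing $e_{i,j}=e_{i,i}e_{i,j}e_{j,j}$ gives $0\neq\phi(e_{i,j})\in e_{\sigma(i),\sigma(i)}S'_g e_{\sigma(j),\sigma(j)}$.
\item Because $\mathcal{S}'$ is very good, this corner equals $Ke_{\sigma(i),\sigma(j)}$ if $(\sigma(i),\sigma(j))\in\rho^{\mathcal{S}'}_g$, and is zero otherwise. So $(\sigma(i),\sigma(j))\in\rho^{\mathcal{S}'}_g$.
\item Applying the same reasoning to $\phi^{-1}$ shows that $\sigma\times\sigma$ is a bijection from $\rho^{\mathcal{S}}_g$ onto $\rho^{\mathcal{S}'}_g$ for every $g\in G$.
\item By Definition~\ref{def:partial-action-from-grading}, this says exactly that $\sigma(X^{\mathcal{S}}_g)=X^{\mathcal{S}'}_g$ and $\sigma(\alpha^{\mathcal{S}}_g(j))=\alpha^{\mathcal{S}'}_g(\sigma(j))$. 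Thus $\Phi(\mathcal{S})$ and $\Phi(\mathcal{S}')$ are equivalent.
\end{enumerate}

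\emph{Conclusion.} The second direction shows that the map induced by $\Phi$ on isomorphism classes is well defined. The first direction, combined with $\Phi\circ\Psi=\id$, shows that this induced map is injective: if $\Phi(\mathcal{S})\sim\Phi(\mathcal{S}')$, then $\mathcal{S}=\Psi\Phi(\mathcal{S})\cong\Psi\Phi(\mathcal{S}')=\mathcal{S}'$. Surjectivity is inherited from $\Phi$ itself.

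\emph{Main obstacle.} The key step is extracting the permutation $\sigma$ from $\phi$, which requires that $\phi$ permutes the primitive idempotents $e_{i,i}$ of $D$. This is where the hypotheses that $K$ is a field and that $S_e=D$ are used. The rest is bookkeeping with the corners $e_{k,k}Se_{l,l}$, each of which is a single homogeneous line $Ke_{k,l}$ or zero.
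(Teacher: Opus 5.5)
Your proposal is correct and follows essentially the same route as the paper. For equivalent actions you use the map $e_{i,j}\mapsto e_{\sigma(i),\sigma(j)}$, realized as conjugation by $P_\sigma$. For the converse you use the permutation of the primitive idempotents of $D\cong K^n$ together with the corner computation $e_{\sigma(i),\sigma(i)}S'_g e_{\sigma(j),\sigma(j)}$. You are slightly more explicit than the paper about the reverse inclusions via $\sigma^{-1}$ and $\phi^{-1}$, and about why the induced map on classes is well defined, injective and surjective. One small slip there: the identity you actually use is $\Psi\circ\Phi=\id$, not $\Phi\circ\Psi=\id$, but both hold.
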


\begin{proof}
Let $\alpha=(X_g,\alpha_g)_{g\in G}$ and
$\beta=(Y_g,\beta_g)_{g\in G}$ be equivalent partial actions, and let
$\sigma:\n\to\n$ be an equivalence between them. Since both orbit
equivalence relations are $\rho$, the bijection $\sigma$ is an
automorphism of the relation $\rho$. Hence the $K$-linear 
map $\Phi_\sigma : S \to S$, defined by 
$\Phi_\sigma(e_{i,j}) := e_{\sigma(i),\sigma(j)}$,
is a $K$-algebra automorphism. If $(i,j)\in\rho_g$ 
for the grading associated to $\alpha$, then $i=\alpha_g(j)$, 
and therefore $\sigma(i) = \sigma(\alpha_g(j)) = 
\beta_g(\sigma(j))$.
Thus $(\sigma(i),\sigma(j))$ belongs to the 
degree-$g$ part of the grading associated to $\beta$.
Consequently, $\Phi_\sigma$ is a graded isomorphism.

Conversely, let $(S_g)_{g\in G}$ and 
$(S'_g)_{g\in G}$
be two very good $G$-gradings on $S$ satisfying $S_e=S'_e=D$, 
and suppose that $\Phi:(S,(S_g)_{g\in G}) \to
(S,(S'_g)_{g\in G})$
is a graded $K$-algebra isomorphism. Since $\Phi(D)=D$ and
$D\cong K^n$, the restriction of $\Phi$ to $D$ permutes 
the primitive idempotents of $D$. Thus there is a 
permutation $\sigma$ of $\n$ with
$\Phi(e_{i,i})=e_{\sigma(i),\sigma(i)}$ for every $i \in \n$.

Take $(i,j)\in\rho$. Then
$0 \neq \Phi(e_{i,j}) = 
\Phi(e_{i,i}e_{i,j}e_{j,j}) =
e_{\sigma(i),\sigma(i)}\Phi(e_{i,j})
e_{\sigma(j),\sigma(j)}$.
It follows that $(\sigma(i),\sigma(j))\in\rho$ and that
$\Phi(e_{i,j}) = \lambda_{i,j} e_{\sigma(i),\sigma(j)}$
for some $\lambda_{i,j}\in K \setminus \{0\}$. 
Applying the same argument to
$\Phi^{-1}$ shows that $\sigma$ is an automorphism of $\rho$.

Let $\alpha$ and $\beta$ be the partial actions corresponding to the
two gradings. If $j\in X_{g^{-1}}$ and
$\alpha_g(j)=i$, then $e_{i,j}\in S_g$. 
Since $\Phi$ is graded, $\Phi(e_{i,j})=\lambda_{i,j}
 e_{\sigma(i),\sigma(j)}$ gives
$e_{\sigma(i),\sigma(j)}\in S'_g$. Hence
$\beta_g(\sigma(j))=\sigma(i)=\sigma(\alpha_g(j))$.
It also follows that $\sigma(X_{g^{-1}})=Y_{g^{-1}}$ for every
$g\in G$. Thus $\sigma$ is an equivalence between $\alpha$ and
$\beta$.
\end{proof}

\begin{cor}\label{cor:classification-strict-elementary}
Let $K$ be a field. The correspondence in
Theorem~\ref{thm:partial-action-correspondence} induces a bijection
between the set of graded $K$-algebra isomorphism classes of strictly
elementary $G$-gradings on $M_n(K)$ and the set of equivalence classes
of free and transitive partial actions of $G$ on $\n$.
\end{cor}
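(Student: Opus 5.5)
The plan is to specialize Theorem~\ref{thm:classification-partial-actions} to the full relation $\rho=\n^{\,2}$ and to use Corollary~\ref{cor:strict-elementary-partial} to identify the two sets on each side. Take $\rho:=\n^{\,2}$, so that $M_n(\rho,K)=M_n(K)$. Theorem~\ref{thm:classification-partial-actions} then gives a bijection between two sets:
\begin{itemize}
\item the graded $K$-algebra isomorphism classes of very good $G$-gradings on $M_n(K)$ with $S_e=D$;
\item the equivalence classes of free partial actions of $G$ on $\n$ whose orbit equivalence relation is $\n^{\,2}$.
\end{itemize}
It remains to show that these two sets are exactly the ones named in the corollary.

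For the action side, a partial action has orbit relation $\n^{\,2}$ precisely when $i\sim_\alpha j$ for all $i,j$, which is the definition of transitive. So the second set consists of the equivalence classes of free and transitive partial actions.

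For the grading side, I would show that a very good $G$-grading on $M_n(K)$ satisfies $S_e=D$ if and only if it is strictly elementary. This is the content of the proof of Corollary~\ref{cor:strict-elementary-partial}.
\begin{itemize}
\item If the grading is strictly elementary, then $\deg(e_{i,j})=e$ forces $g_i=g_j$, hence $i=j$, so $S_e=D$.
\item If $S_e=D$, then setting $g_i:=\deg(e_{i,1})$ gives $\deg(e_{i,j})=g_ig_j^{-1}$ with the $g_i$ pairwise distinct.
\end{itemize}
I also need that every strictly elementary grading is very good, so that it lies in the domain of Theorem~\ref{thm:partial-action-correspondence}. This holds because such a grading is, by definition, a very good grading induced by a tuple, and every $e_{i,j}$ is homogeneous.

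The remaining concern is that the bijection on isomorphism classes is well defined when restricted to these subsets. Graded isomorphism might a priori relate a strictly elementary grading to one that is not. This cannot happen, since both the strictly elementary gradings and the gradings with $S_e=D$ form the same class $\mathcal{G}(\n^{\,2},G,K)$. Isomorphism classes are therefore taken within this class, and the restriction is just Theorem~\ref{thm:classification-partial-actions} itself. There is no real obstacle beyond this bookkeeping: the main point to check carefully is that ``strictly elementary'' coincides with ``very good with $S_e=D$''.
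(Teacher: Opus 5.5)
Your proposal is correct and is the paper's argument: the paper proves this corollary in one line by applying Theorem~\ref{thm:classification-partial-actions} with $\rho=\n\times\n$ and using Corollary~\ref{cor:strict-elementary-partial}. You spell out the two identifications that line relies on, namely that strictly elementary gradings on $M_n(K)$ are exactly the very good gradings with $S_e=D$, and that orbit relation $\n^{\,2}$ is exactly transitivity, with isomorphism classes taken within $\mathcal{G}(\n^{\,2},G,K)$ as in the theorem.
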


\begin{proof}
This follows from Corollary~\ref{cor:strict-elementary-partial} and
Theorem~\ref{thm:classification-partial-actions}, applied to
$\rho=\n\times\n$.
\end{proof}

\end{document}